\documentclass[hidelinks,onefignum,onetabnum]{siamart251216}

\usepackage{amsmath,amsfonts,amssymb,bm}
\usepackage{graphicx}
\usepackage{booktabs}
\usepackage{cleveref}
\usepackage{todonotes,mathtools} %, relsize,geometry}
\usepackage{algorithm}
\usepackage{algpseudocode}
\usepackage{subcaption}
\usepackage{graphicx}
\usepackage{multirow}
\usepackage{multirow}

\newcommand{\KMN}{{\mbox{\sc kme}}}
\newcommand{\KML}{{\mbox{\sc kml}}}
\newtheorem{example}[theorem]{Example}
\newtheorem{remark}[theorem]{Remark}

\def\im{i_m}
\def\imm{i_{m+1}}

\def\il{i_\ell}

\def\R{\mathbb{R}}
\def\N{\mathbb{N}}
\DeclareMathOperator{\K}{K}
\newcommand{\rev}[1]{\textcolor{black}{#1}}
\newcommand{\vs}[1]{\textcolor{black}{#1}}
\newsavebox{\fitbox}
\newcommand{\fittable}[1]{%
  \sbox{\fitbox}{#1}%
  \ifdim\wd\fitbox>\linewidth
    \resizebox{\linewidth}{!}{\usebox{\fitbox}}%
  \else
    \usebox{\fitbox}%
  \fi
}

\title{Edge-based Katz centralities for spatio-temporal multiplex networks}
\author{Kai Bergermann\thanks{Centro di Ricerca Matematica Ennio De Giorgi, Scuola Normale Superiore, Pisa, Italy, {\tt kai.bergermann@sns.it}} \and Francesco Gravili\thanks{Work done while at Dipartimento di Matematica,
Alma Mater Studiorum Universit\`a di Bologna,
Piazza di Porta San Donato  5, I-40127 Bologna, Italy} \and Valeria Simoncini\thanks{Dipartimento di Matematica and (AM)$^2$,
Alma Mater Studiorum Universit\`a di Bologna,
Piazza di Porta San Donato  5, I-40127 Bologna, Italy, and IMATI-CNR, Pavia, Italy,  {\tt valeria.simoncini@unibo.it}} \and Martin Stoll\thanks{Chair of Scientific Computing, Technische Universit\"at Chemnitz, Department of Mathematics, 09107 Chemnitz, Germany, {\tt martin.stoll@math.tu-chemnitz.de}}}

\begin{document}
\maketitle

\begin{abstract}
Katz centrality is a well-established measure to identify and rank the most important nodes in complex networks by means of a linear system solve.
Recent works have developed notions of Katz centrality for temporal, i.e., time-evolving networks.
Their drawback is that small changes in the network structure may drastically change centralities across large parts of the network.
Moreover, an unproportional effort in the network science community has been devoted to the study of node-based quantities while edge-based measures are far less explored.

In this manuscript, we introduce a novel failure-aware temporal multiplex network model for computing edge-based Katz centralities for spatial networks.
This class of networks admits the use of the line graph as network representation.
We use a block-triangular and -banded supra-adjacency matrix representation, modeling inter-layer connections as low-rank matrices, which assigns a spatially constrained emphasis on up- or downdated edges.
These could represent, e.g., \vs{breakages of water / gas pipes} or \vs{obstructions in street networks 
%in water or street networks 
in} real-time infrastructure monitoring applications.
We analyze the structure of the %inverse of the 
block matrix \vs{inverse} with respect to entry decay in the blocks above the block-diagonal.
This gives rise to two truncation approaches that allow drastic computational runtime reductions at the cost of introducing a controlled  truncation error.
Numerical experiments on a range of real-world spatio-temporal networks of size up to $2\cdot 10^8$ illustrate accuracy and efficiency with runtime gains of up to a factor of $679$.
\end{abstract}

%%%%%%%%%%%%%%%%%%%%%%%%%

\section{Introduction}

Complex networks have become indispensable for the study of complex systems from \vs{social,
%over
biological} to infrastructural applications \cite{newman2003structure,boccaletti2006complex}.
Temporal networks describe the evolution of such network structures over time.
A popular approach to capture time-varying networks is via temporal multiplex networks \cite{holme2012temporal,kivela2014multilayer,boccaletti2014structure,masuda2016guide}.

Centrality measures rank among the many well-studied structural properties of complex networks \cite{freeman1977set,freeman1978centrality,bonacich1987power,brin1998anatomy,page1999pagerank}.
A recent survey lists over $400$ different ways of measuring the importance of nodes in a complex network \cite{shvydun2025zoo}.
Matrix function-based centrality measures form a family of such measures based on counting walks on networks that allow an elegant and numerically attractive formulation in terms of matrix functions \cite{katz1953new,estrada2005subgraph,estrada2008communicability,estrada2010network,benzi2013total,benzi2020matrix}.

A successful model of time varying networks relies on a multilayer graph, giving rise to a multiplex matrix that encompasses varying adjacency matrices \cite{kivela2014multilayer,boccaletti2014structure}.
The analysis of network importance has progressed significantly from static node centrality to dynamic measures capable of capturing structural evolution \cite{grindrod2011communicability,grindrod2013matrix,grindrod2014dynamical,fenu2017block,arrigo2017sparse,al2021block,arrigo2022dynamic}. Block matrix formulations provide a crucial framework for modeling (time) evolving networks~\cite{fenu2017block}, which has been extended to layer-coupled multiplex structures to handle inter-layer dependencies~\cite{arrigo2016edge, el2023perron, noschese2024enhancing}. While traditional measures focus on nodes, edge centrality has emerged as a critical tool for assessing robustness, often utilizing line graphs~\cite{de2020edge, de2021centrality} or sensitivity analysis via Fr\'echet derivatives to quantify the impact of edge modifications on total communicability~\cite{de2022communication, schweitzer2023sensitivity, noschese2025edge,bergermann2022fast}. Recent work has formalized edge importance in complex networks through Perron sensitivity and hub/authority communicabilities~\cite{arrigo2016edge, noschese2024enhancing}. To mitigate the computational cost of recomputing centralities following network changes, updating strategies leveraging walk counting have been proposed~\cite{arrigo2025updating}, alongside optimization approaches for maximizing network robustness~\cite{massei2024optimizing}. Furthermore, temporal extensions of Katz centrality have been explored~\cite{taylor2021tunable}, yet a unified approach combining edge-based metrics with efficient updating for failure-aware temporal multiplex networks remains largely unexplored.

% \begin{itemize}
% \item Edge centralities via line graphs \cite{de2020edge,de2021centrality}
% \item Frechet derivatives for measuring the sensitivity of edge addition/removal on the total communicability of networks \cite{de2022communication,schweitzer2023sensitivity,noschese2025edge}
% \item Perron sensitivity \cite{de2022communication,noschese2025edge}
% \item generalizations to multiplex/multilayer networks \cite{el2023perron,noschese2024enhancing}
% \item Definition of total hub/authority communicabilities $\bm{1}^\top f(\bm{AA}^\top)\bm{1}$ and $\bm{1}^\top f(\bm{A}^\top\bm{A})\bm{1}$ for digraphs.
% In-/decrease of communicabilities via the up-/downdate of central edges.
% The proposed edge centralities (various measures) are defined as the product of node centralities of the adjacent nodes. \cite{arrigo2016edge}
% \item Investigation of the change of Katz centralities under node/edge removal by counting walks -- analysis heavily relies on series representation \cite{arrigo2025updating} 
% \item Maximizing in-/decrease of network robustness by several optimization approaches \cite{massei2024optimizing}
% \item several works on temporal Katz centralities... % \cite{Grindrod2011}
% \end{itemize}

\subsection{Contributions}
In this work, we propose a failure-aware temporal multiplex network model for computing edge-based temporal Katz centralities for spatial networks.
Our contributions are summarized as follows.
\begin{itemize}
  \item We introduce a novel edge-based block-matrix representation \eqref{eq:block_matrix} of temporal multiplex networks in which network up- or downdates are incorporated via low-rank inter-layer coupling matrices in the supra-adjacency matrix.
  This puts a localized emphasis on up- or downdated edges and their direct neighborhoods, 
  %while
  \vs{whereas}
  centralities in the standard temporal multiplex model \eqref{eq:block_matrix_standard} may change substantially in large parts of the network.
  \item We analyze entry decay in the upper-triangular blocks of the failure-aware supra-adjacency matrix when Katz centralities are computed as row sums of the resolvent matrix function.
  In \Cref{sec:structural_analysis}, we show that entry decay depends exponentially on the shortest-path distance between up- or downdated edges in consecutive time layers.
  Entry decay is expected to be rapid since spatial networks with low average degree and large diameter are characterized by large average distances.
  \item Based on our analysis, we propose a computationally efficient truncation strategy to approximate failure-aware edge-based temporal Katz centralities.
  Reducing computations to the evaluation of the diagonal blocks allows for a parallel approach that substantially outperforms the solution of the full block system or an inherently sequential back-substitution strategy.
  \item We study dynamically updated networks that appear, e.g., in real-time monitoring of infrastructure networks.
  Adding a new time layer generally affects centralities across all previous time steps and calls for the re-computation of the full block system.
  Again based on our analysis, \Cref{sec:truncation_strategy} introduces a second truncation strategy that reduces the computational effort to the solution of a small number of small linear systems.
\end{itemize}

%%%%%%%%%%%%%%%%%%%%%%%%%
\subsection{Notation}
In the following, bold Roman letters denote matrices associated with the graph dimensions, while curly bold letters refer to multiplex matrices. The vector ${\bm e}_j$ denotes the $j$-th vector of the canonical basis, whose dimension is clear from the context. The vector of all ones is denoted with ${\bm 1}$, while the zero matrix is denoted with $\bm{0}\in\mathbb{R}^{N\times M}$; in both cases the dimensions should be clear from the context, or explicitly given as a subscript, e.g., ${\bm 1}_L\in{\mathbb R}^L$. With some abuse of notation, subscripts and superscripts are also used to identify quantities at the indicated step. For instance, ${\bm A}_L$ refers to a matrix
at time step $L$, while ${\bm c}_\ell^{(L)}$ identifies the
$\ell$-th vector in a sequence at time step $L$.

%%%%%%%%%%%%%%%%%%%%%%%%%
\section{Models for temporal multiplex networks}\label{sec:multiplex_models}

We start by introducing the algebraic representation of edge-based networks.
After briefly describing the current standard block matrix model of temporal multiplex networks, we introduce a novel failure-aware temporal multiplex model, which puts a more localized emphasis on the time-varying parts of the network.

\subsection{Edge-based networks}\label{sec:edge_networks}

A graph or network $\mathcal{G}=(\mathcal{V},\mathcal{E})$ consists of a set $\mathcal{V}$ of $n\in\N$ nodes and a set $\mathcal{E}\subset\mathcal{V}\times\mathcal{V}$ of $N\in\N$ edges.
The majority of network analysis techniques such as centrality analysis or community detection is posed on the node-level \cite{newman2003structure,boccaletti2006complex,kivela2014multilayer,boccaletti2014structure}.
However, in spatial networks such as road, water, power, or gas networks it can be argued that the objects of interest are \emph{edges} along which passengers or utilities are distributed.

The standard algebraic representation of a network $\mathcal{G}$ on the node-level is its adjacency matrix $\bm{A}_{\mathrm{node}}\in\R^{n\times n}$, where $[\bm{A}_{\mathrm{node}}]_{ij}>0$ indicates the presence of an edge connecting nodes $i$ and $j$ while $[\bm{A}_{\mathrm{node}}]_{ij}=0$ indicates that no such edge is present.
For simplicity, we restrict ourselves to undirected and unweighted networks with node adjacency matrices $\bm{A}_{\mathrm{node}}^\top=\bm{A}_{\mathrm{node}}\in\{0,1\}^{n\times n}$.
However, all results and techniques should extend to weighted undirected networks in a straightforward manner.

As network representation on the edge-level, we choose the line graph corresponding to $\bm{A}_{\mathrm{node}}$.
This choice may be inappropriate for certain types of networks such as small-world networks that typically exhibit $N\gg n$ and contain a significantly larger number of non-zero entries in their line graphs.
In our setting of spatial networks $\mathcal{G}$ with an underlying two-dimensional structure, however, we encounter line graphs of similar size and sparsity compared to the original node-network.
The edge adjacency matrix is then defined as $\widetilde{\bm{A}} = \bm{B}^\top\bm{B} - 2\bm{I}\in\R^{N\times N}$, where $\bm{B}\in\{0,1\}^{n\times N}$ denotes the unweighted incidence matrix corresponding to $\bm{A}_{\mathrm{node}}$.

\subsection{The standard temporal multiplex network model}\label{sec:standard_model}

Multiplex networks consist of several network layers that encompass different relationships among the same set of nodes.
Their algebraic representation typically relies on tensors or block matrices \cite{de2013mathematical,kivela2014multilayer,boccaletti2014structure}.
In this work, we adopt the notion of supra-adjacency matrices \cite{kivela2014multilayer}.

Temporal networks may be modeled via multiplex networks by recording the network state at each point in time as one layer \cite{taylor2017eigenvector,taylor2019supracentrality,taylor2021tunable,bergermann2022fast}.
Denoting the number of layers or time points by $L\in\N$, each layer is represented by an edge adjacency matrix $\widetilde{\bm{A}}_\ell\in\R^{N\times N}$, where $\ell=1,\dots,L$, cf.~\Cref{sec:edge_networks}.
For ease of notation, we define the damping parameter
\begin{equation}\label{eq:def_alpha}
\alpha = \frac{c_\alpha}{\max_{\ell=1,\dots,L} \rho(\widetilde{\bm{A}}_\ell)}
\end{equation}
where $c_\alpha\in(0,1)$ and $\rho(\cdot)$ denotes the spectral radius, and absorb it into the definition of the scaled edge adjacency matrices $\bm{A}_\ell = \alpha \widetilde{\bm{A}}_\ell$  for $\ell=1,\dots,L$. 

The standard temporal supra-adjacency matrix is then defined as
\begin{equation}
\label{eq:block_matrix_standard}
\widetilde{\bm{\mathcal{M}}}_L=
\begin{bmatrix}
\bm{A}_1&\bm{I}&\bm{0}&\dots&\bm{0}\\
\bm{0}&\bm{A}_2&\bm{I}&\dots&\bm{0} \\
\bm{0}&\bm{0}&\bm {A}_3 & \ddots&\vdots \\
\vdots & \vdots&\dots & \ddots&\bm{I} \\
\bm{0} & \bm{0} &\dots&\dots& \bm{ A}_L
\end{bmatrix}
\in \mathbb{R}^{NL\times NL},
\end{equation}
with the layer adjacency matrices on the block-diagonal and identity matrices on the super-diagonal.
It has been shown in \cite{fenu2017block} that \eqref{eq:block_matrix_standard} is the block matrix formulation corresponding to the widely studied dynamic communicabilities, cf., e.g., \cite{grindrod2011communicability,fenu2017block,arrigo2017sparse,al2021block,arrigo2022dynamic}.

%\begin{figure}[t]
%\includegraphics[width=.99\textwidth]{figs/Marina.png}
%\caption{Edge-based marginal node Katz centralities of the failure-aware temporal multiplex network model in comparison to the standard temporal multiplex network model.
%In the cycling network in the harbour region of Marina di Pisa, Italy, the most central edge is downdated from time step 1 to time step 2 and restored from time step 2 to time step 3.
%The failure-aware model puts a local emphasis on the up- and downdates.
%In comparison to the standard model, changes in edge centralities are spatially constrained.
%Moreover, the restoration of original edge centrality scores takes relatively many time layers.\todo[inline]{KB to prettify plots with basemap background}}\label{fig:illustration_marina}
%\end{figure}

\begin{figure}[htbp]
	\centering
	\begin{tabular}{ccc}
		\subfloat[Initial Katz centralities]{\includegraphics[width=0.3\linewidth,height=0.20\linewidth]{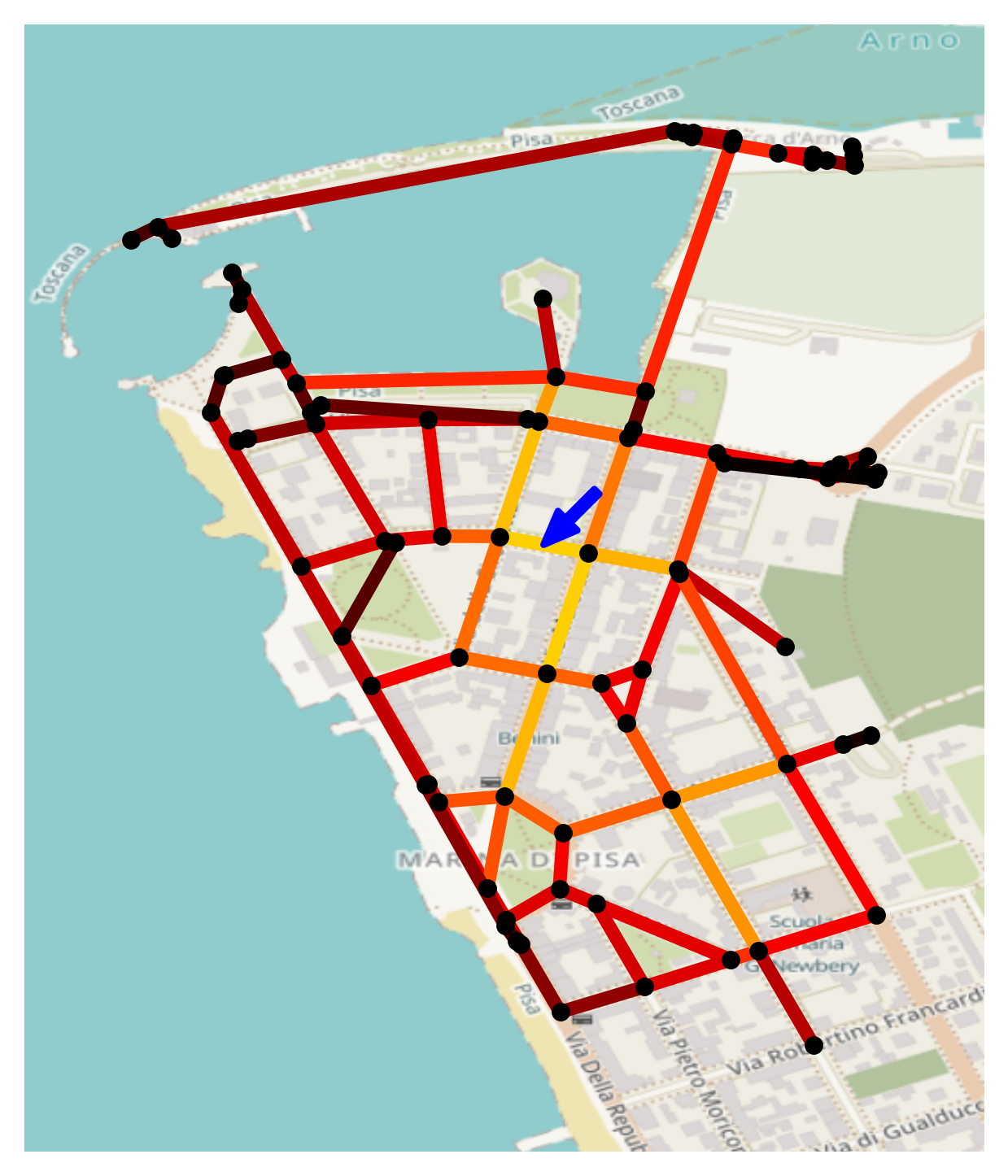}} &
		\subfloat[Standard temporal Katz, $\ell=2$]{\includegraphics[width=0.3\linewidth,height=0.20\linewidth]{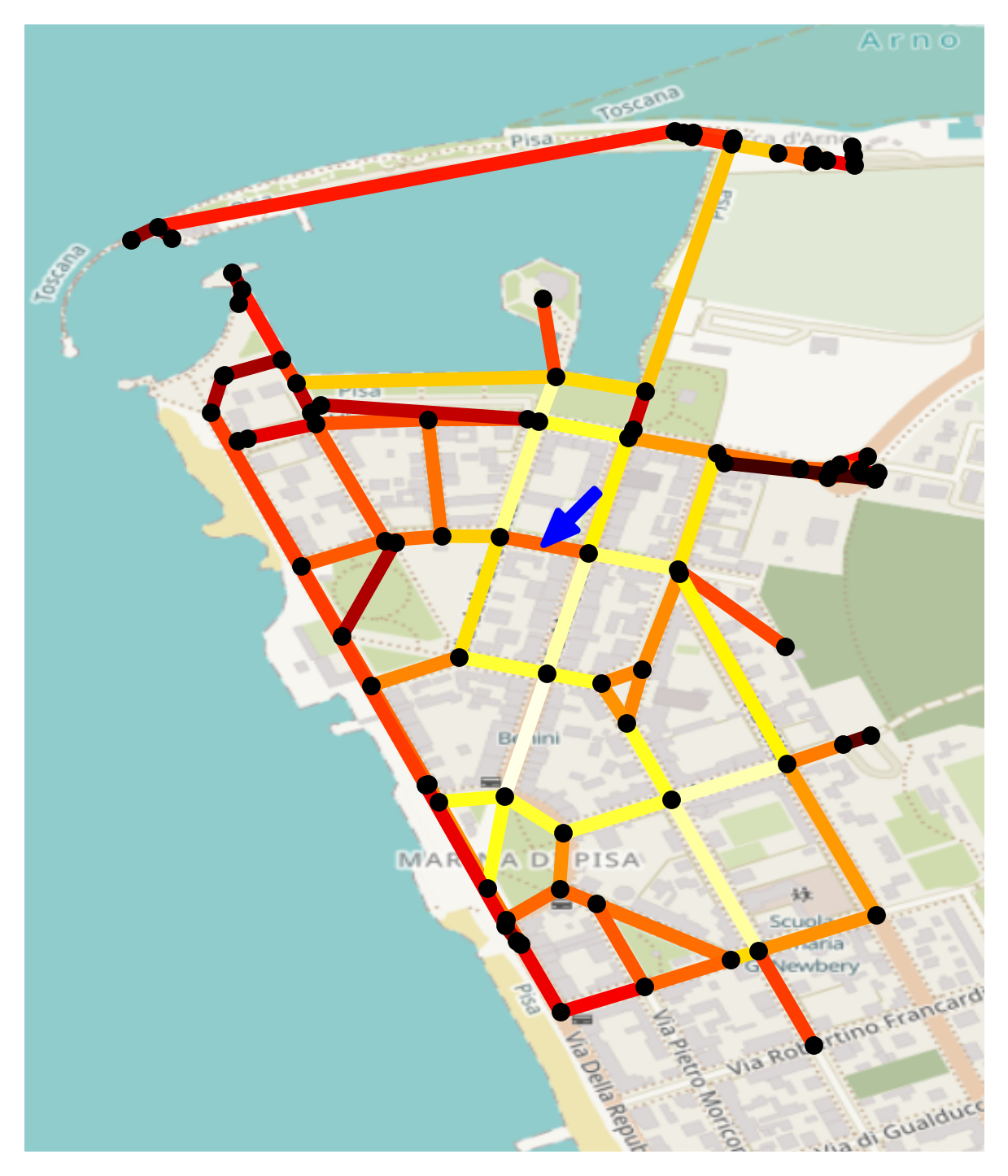}} &
		\subfloat[Failure-aware temporal Katz, $\ell=2$]{\includegraphics[width=0.3\linewidth,height=0.20\linewidth]{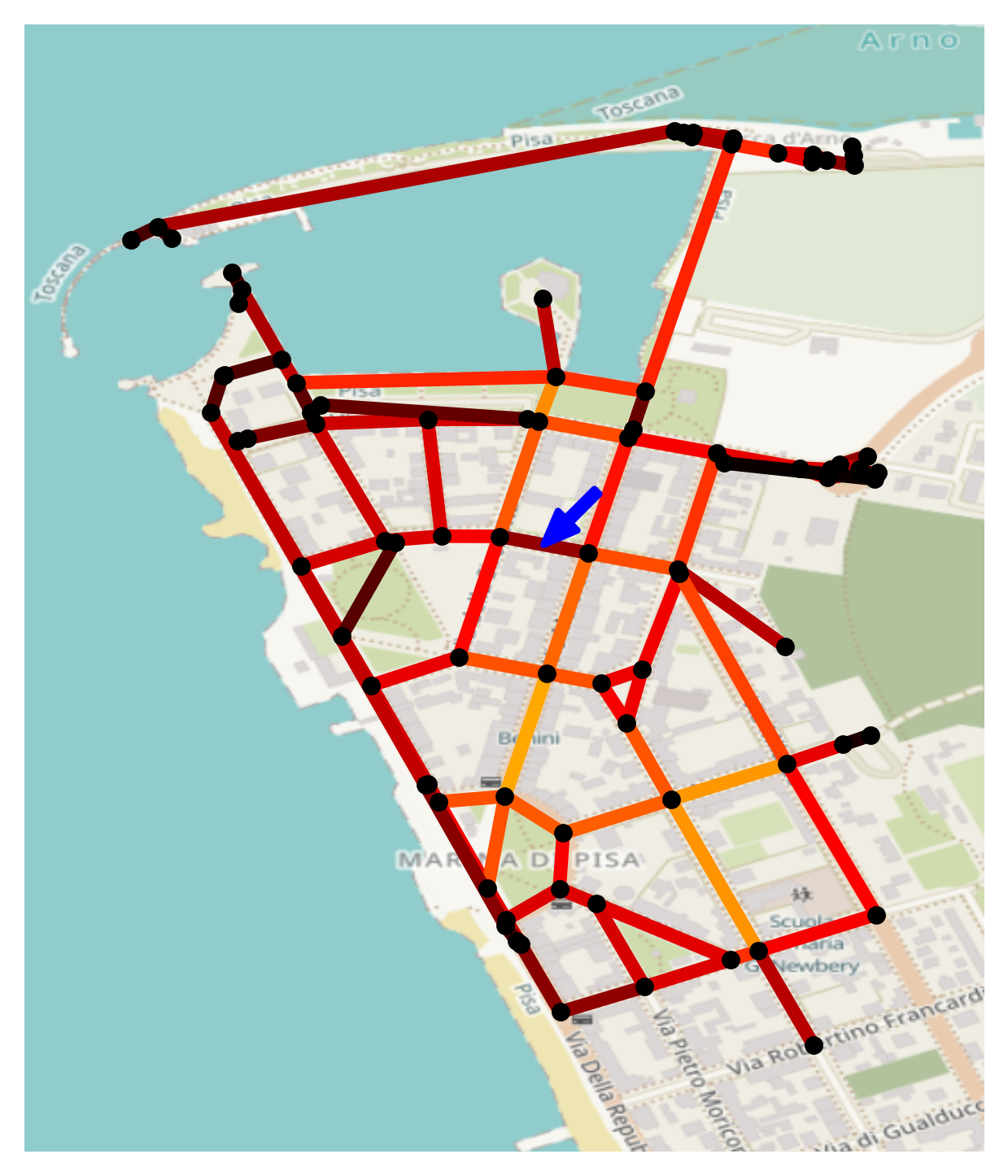}} \\
		%\multirow{2}{*}{\raisebox{-100pt}[0pt][0pt]{
        \includegraphics[height=0.25\linewidth,keepaspectratio]{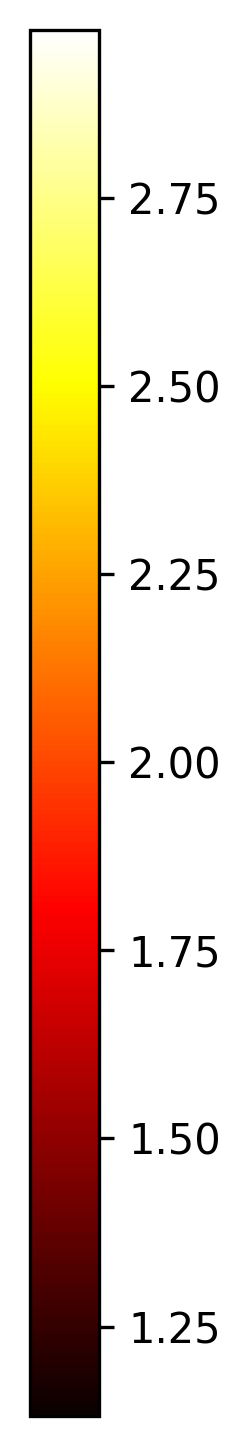}%}}
        &
		\subfloat[Standard temporal Katz, $\ell=5$]{\includegraphics[width=0.3\linewidth,height=0.20\linewidth]{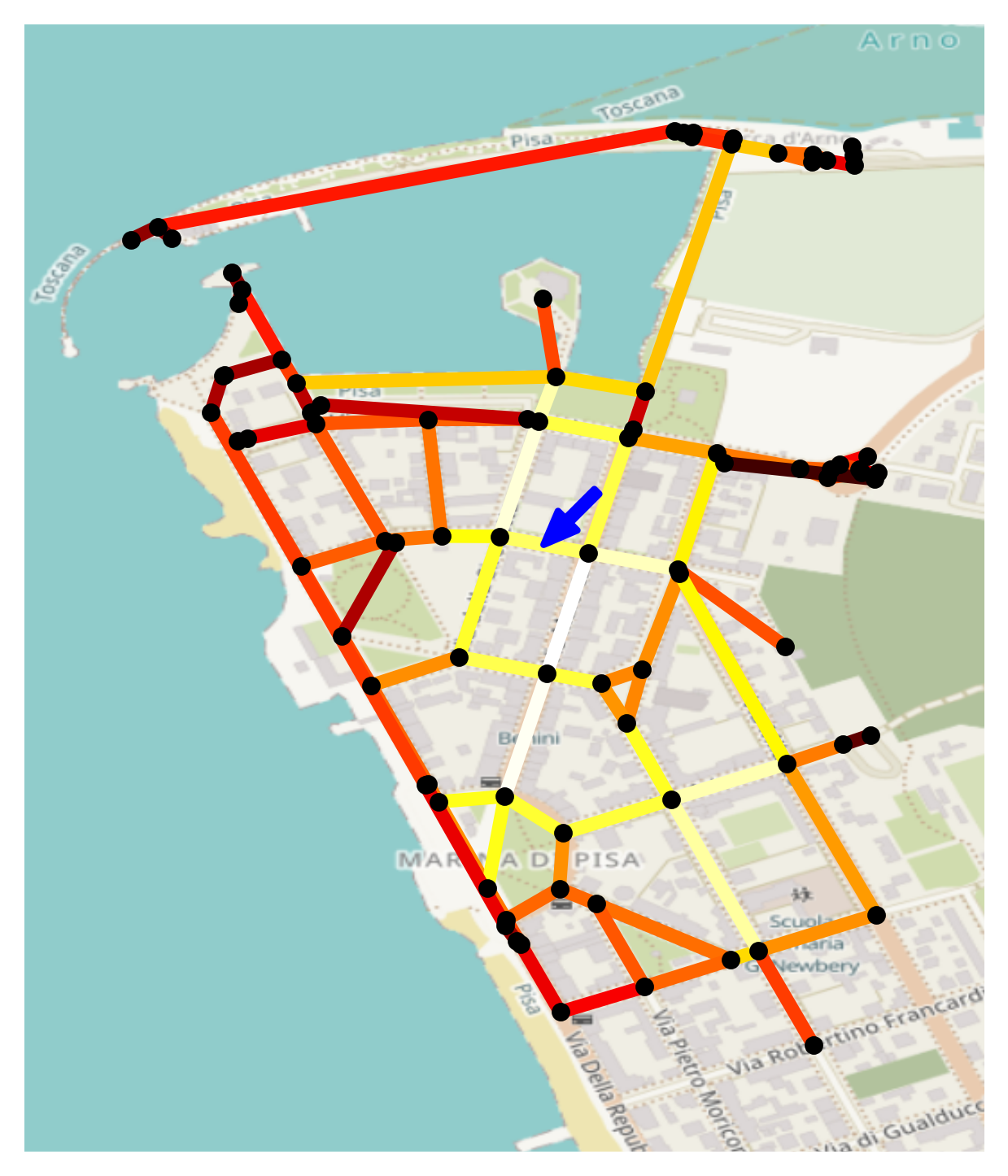}} &
		\subfloat[Failure-aware temporal Katz, $\ell=5$]{\includegraphics[width=0.3\linewidth,height=0.20\linewidth]{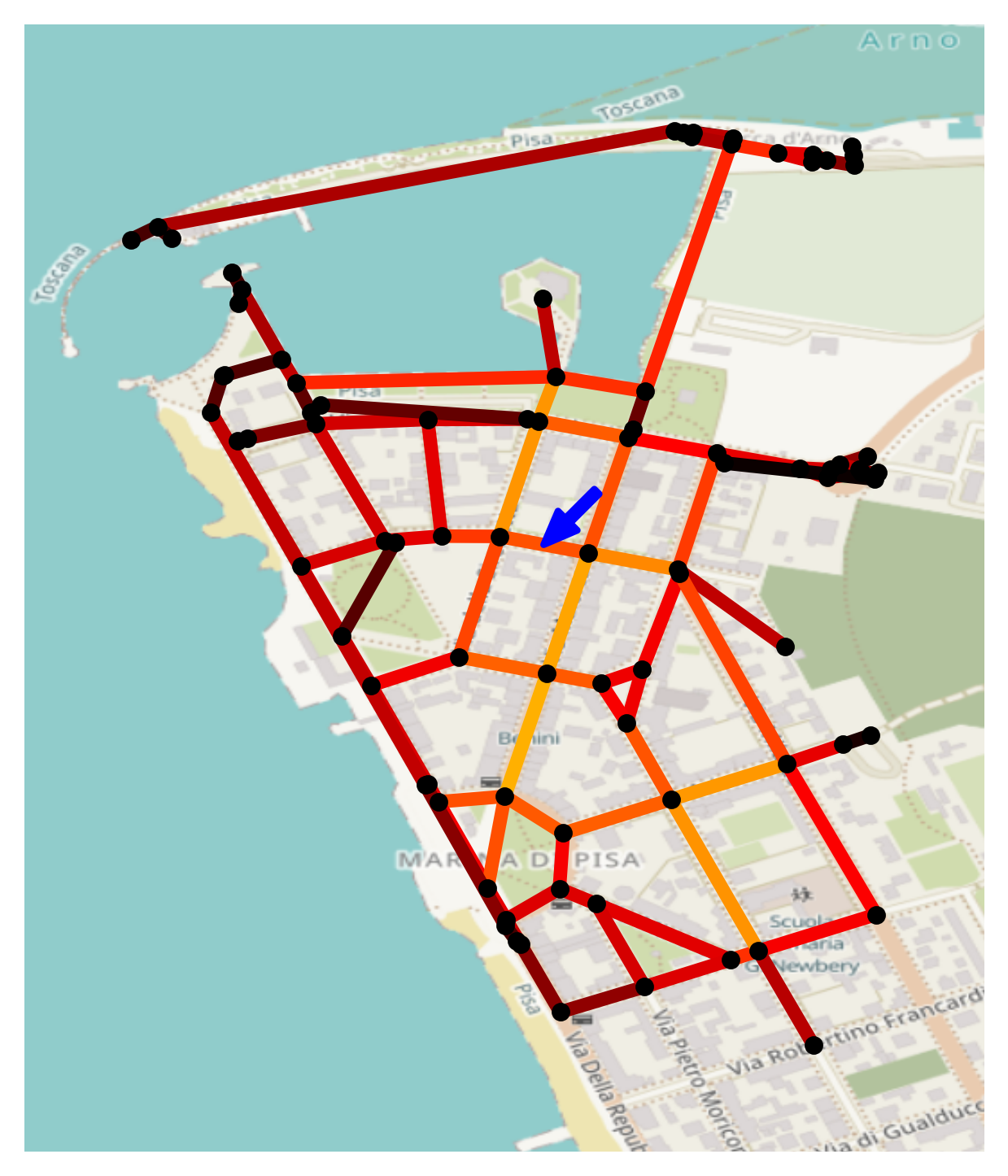}} \\
		% second row of multirow left cell left empty (no caption)
        \raisebox{-20pt}[0pt][0pt]{\includegraphics[height=0.25\linewidth,keepaspectratio]{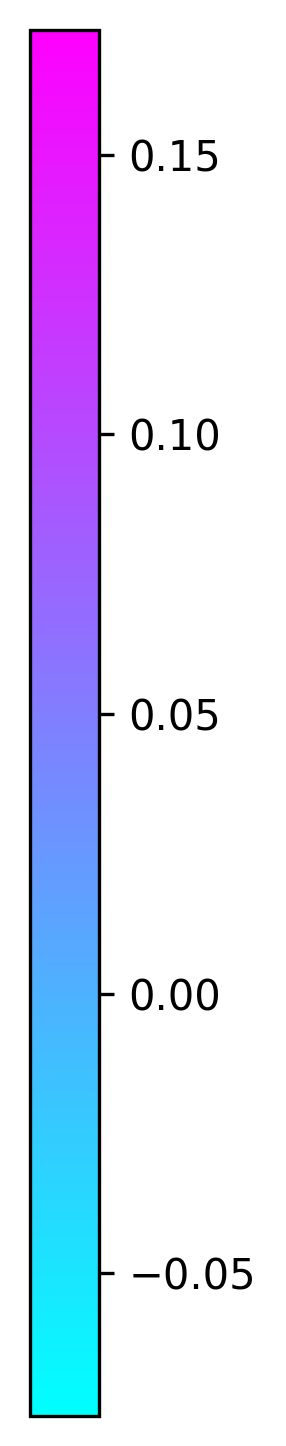}}
		& \subfloat[Difference of standard temporal Katz between $\ell=5$ and $\ell=10$]{\includegraphics[width=0.3\linewidth,height=0.20\linewidth]{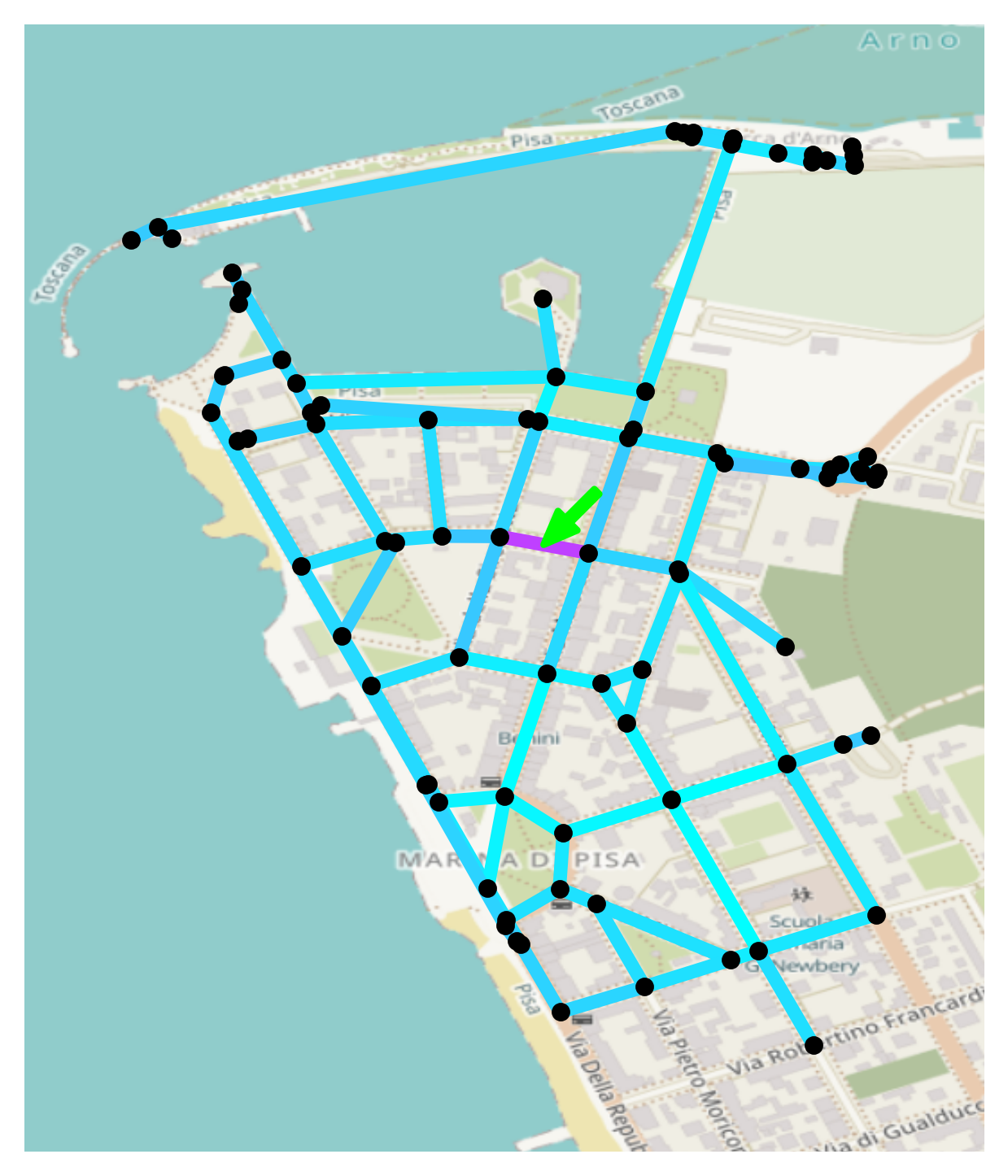}\label{fig:illustration_marina_diff_std}} &
		\subfloat[Difference of failure-aware temporal Katz between $\ell=5$ and $\ell=10$]{\includegraphics[width=0.3\linewidth,height=0.20\linewidth]{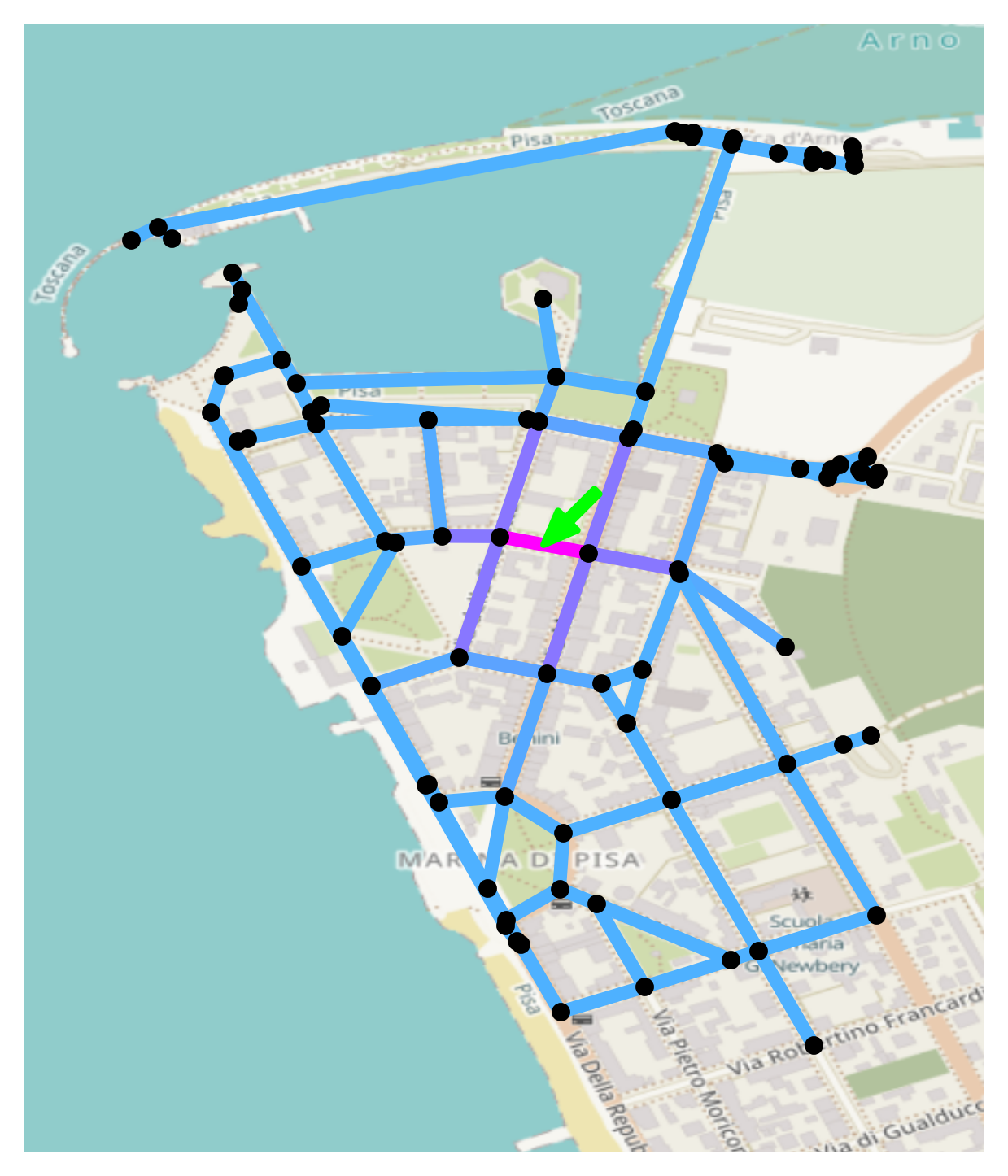}} \\
	\end{tabular}
	\caption{\rev{Marginal edge} Katz centralities of the failure-aware temporal multiplex network model in comparison to the standard temporal multiplex network model.
		In the cycling network in the harbour region of Marina di Pisa, Italy, the most central edge \rev{(highlighted by the green arrow)} is downdated from time step 1 to time step 2 and restored from time step 2 to time step 3.
        \rev{Panels (a)-(e) show marginal edge Katz centralities, while panels (f)-(g) show centrality differences on a separate color scale for better visibility.}
		The failure-aware model puts a local emphasis on the up- and downdates.
		In comparison to the standard model, changes in edge centralities are spatially constrained \rev{while panel (f) illustrates that standard temporal Katz centralities vary throughout the whole network several time steps after the last network update.}}\label{fig:illustration_marina}
\end{figure}

%\todo[inline]{VS: plots (a-c-e-g) look identical, and so (b-d-f)}

%%%%%%%%%%%%%%%%%%%%%%%%%%%%%%
\subsection{A failure-aware temporal multiplex network model}\label{sec:FATE}

Edge up- or downdates in the standard temporal multiplex network model may alter centrality scores over large parts of the networks.
\rev{\Cref{fig:illustration_marina_diff_std} shows that even many time steps after the last update, temporal edge Katz centralities change throughout the whole network in this case.}
%the downdate of the most central road in a cycling network leaves the marginal node centrality of the downdated \vs{road path}
%pipe approximately unchanged while centrality values of edges throughout the whole network increase.

We propose a failure-aware temporal multiplex network model that puts a localized emphasis on the affected network region.
This is beneficial to, e.g., monitor disruptions in infrastructure networks by promoting centrality changes in the affected edges and their neighborhood.
This behavior is illustrated in \Cref{fig:illustration_marina}.
In particular, downdating the most central edge of the network after time step 1 reduces the edge's marginal node centrality under the failure-aware temporal multiplex network model to the minimal value of $1$ in time step 2.
Albeit the recovery of the edge after time step 2, it takes several additional time steps without up- or downdates to restore the edge's original marginal edge centrality.
The model hence retains comparability of centrality scores across time layers while reflecting disruptions in a clear and lasting manner.

To achieve this behavior, we consider the difference between consecutive time layers via the up-/downdate matrices $\bm{W}_\ell=\bm{A}_{\ell+1}-\bm{A}_{\ell}, \ell=1,\dots,L-1$.
We assume the number of up- or downdates from one time step to the other to be moderate such that $\bm{W}_\ell$ is of low-rank.
For example, the up- or downdate of a single edge may be written as the rank-2 matrix $\bm{W}_\ell=\pm (\bm{e}_j\bm{a}_j^\top+\bm{a}_j\bm{e}_j^\top)$, where we define $\bm{a}_j=\bm{A}_\ell\bm{e}_j$.
We propose to replace the identity matrices as temporal inter-layer coupling matrices in the standard model \eqref{eq:block_matrix_standard} by the low-rank matrices $\bm{W}_\ell$.
Hence, the supra-adjacency matrix representation of the failure-aware temporal multiplex network model reads

% To better account for the information transfer between time layers, we introduce
% a \textcolor{red}{\textit{transfer matrix}} encoding the changes between successive time instances of the network, that defines a new multiplex matrix $\bm {\mathcal{M}}$. As temporal networks in the real-world often change connectivity,  be it via breakages in the water network or in deactivation of neurons in the brain, we want to incorporate these temporal changes into the model. We consider the difference between two states of the network to be given by $\bm{W}_\ell=\bm{A}_{\ell+1}-\bm{A}_{\ell}.$ We assume that unless catastrophic changes happen within the (water) network, the modification to the network via $\bm{W}_\ell$ between two time-points is of low-rank. This means that pipe breakages lead to negative and pipe repairs to positive entries in $\bm{W}_\ell$. If $\bm{W}_\ell$ represents the repair or breakage of a single pipe, it is rank two and explicitly given by $\bm{W}_\ell=\pm (\bm{e}_j\bm{a}_j^T+\bm{a}_j\bm{e}_j^T)$, respectively, where $\bm{a}_j=\bm{A}_\ell\bm{e}_j$.  With this transfer matrix, the network over a certain time-range can be modeled as the supra-adjacency matrix $\bm{\mathcal{M}}_L$ of the failure-aware temporal multiplex network
%
\begin{equation}
\label{eq:block_matrix}
\bm{\mathcal{M}}_L=
\begin{bmatrix}
\bm{A}_1&\bm{W}_1&\bm{0}&\dots&\bm{0}\\
\bm{0}&\bm{A}_2&\bm{W}_2&\dots&\bm{0} \\
\bm{0}&\bm{0}&\bm {A}_3 & \ddots&\vdots \\
\vdots & \vdots&\dots & \ddots&\bm{W}_{L-1} \\
\bm{0} & \bm{0} &\dots&\dots& \bm{ A}_L
\end{bmatrix}
\in \mathbb{R}^{NL\times NL}.
\end{equation}%\label{eq:failure}
We study this network representation in the remainder of this paper with the goal to exploit the structure of the matrix $\bm{\mathcal{M}}_L$ for numerical computations.

%%%%%%%%%%%%%%%%%%%%%%%%%%%%%%%
\section{Edge-based temporal Katz centrality}
Centrality measures are a crucial tool in network analysis to study how networks respond to network failures, cf.~\cite{das2018study,rodrigues2018network} and the references therein.
We introduce Katz centrality, which is well-established on single-layer networks and has recently been studied on multiplex networks \cite{bergermann2022fast,bergermann2021orientations,bergermann2023twitter,el2024tensor}.
%Centrality measures have been studied in a variety of contexts for standard graphs (cf. \cite{rodrigues2018network}) and for multilayer networks (cf. \cite{taylor2021tunable,bergermann2022fast}).

%%%%%%%%%%%%%%%%%%%%%%%%%%%%%%%
\subsection{Katz centrality for single-layer network}

Katz centrality is one member of the family of matrix function-based centrality measures \cite{katz1953new}. In the notation of \Cref{sec:multiplex_models}, the Katz centrality vector $\mathbf{x} \in \mathbb{R}^N$ is defined as
\[
\mathbf{x} = \bm A \mathbf{x} + \mathbf{1},
\]
where $\bm{A}\in\R^{N \times N}$ denotes the edge adjacency matrix.
Remember that the usual damping parameter $\alpha$ is absorbed in the definition of $\bm{A}=\alpha\widetilde{\bm{A}}$ for ease of notation, cf.~\Cref{sec:standard_model}.
Solving for the vector $\bm{x}$, we may define Katz centrality as row sums of the resolvent function
\[
\mathbf{x} = (\bm I - \bm A)^{-1} \mathbf{1}.
\]
It is a well-known fact from graph theory that the quantity $[\bm{A}^p]_{ij}$ for a binary adjacency matrix $\bm{A}$ records the number of distinct walks of length $p$ from node $i$ to node $j$.
Using the Neumann series of the resolvent function
\[
(\bm{I} - \bm{A})^{-1} = \sum_{p=0}^{\infty} \bm{A}^p
\]
hence allows us to interpret
\[
\mathbf{x} = \sum_{p=0}^{\infty} \bm{A}^p \mathbf{1}.
\]
as a weighted sum over all walks of all lengths in the network.
One remarkable feature of Katz centrality is that the choice of the parameter $\alpha$ allows to interpolate between local degree and global eigenvector centrality \cite{benzi2015limiting}.

% We can see from this that the contribution of a node to the centrality of another node is determined by the number of walks of all lengths connecting them, with longer walks weighted by the factor $\alpha^k$. We now carry this over to the multilayer water network.
% $(\bm{I}-\bm{A}_1)^{-1}\bm{1}$

%%%%%%%%%%%%%%%%%%%%%%%%%%%%%%%
\subsection{Katz centrality for failure-aware temporal multiplex networks}\label{sec:def_FATE}
%The multiplex network structure introduced in \eqref{eq:failure} is meant to strengthen the impact edges modifications in time, by keeping track of this network change in the upper diagonal blocks $\bm{W}_\ell, \ell=1,\dots,L-1$.
Katz centrality requires the solution of a linear system involving the shifted adjacency matrix.
For the case of multiplex networks, we define the Katz centrality matrix $\bm{C}=[\bm{c}_1,\dots,\bm{c}_L]\in\mathbb{R}^{N\times L}$ that is obtained by collecting the
$L$ blocks of the solution ${\underline{\bm c}}$ of the linear system
\begin{equation}\label{centrality}
(\bm{I} - \bm{\mathcal{M}}_L) {\underline{\bm c}}= \bm{1}, \qquad with \quad {\underline{\bm c}}=
\begin{bmatrix} \bm{c}_1\\\vdots\\\bm{c}_L\end{bmatrix}, \, \,\bm{1}\in\mathbb{R}^{NL}. 
\end{equation}
%
%where $\bm{1}\in\mathbb{R}^{NL}$ denotes the vector of all ones. 
The centrality of node $i$ at time $\ell$ is the entry $(i,\ell)$ of the matrix $\bm{C}$. We thus have a node- and layer-dependent Katz centrality.  Each layer represents the configuration of the spatial network at one point in time.
Ameding standard terminology \cite{taylor2017eigenvector,bergermann2022fast} to the case of edge networks, we call this centrality measure temporal Katz \emph{edge} centrality (marginal edge centrality) of \rev{edge} $i$:
 \begin{equation}\label{eq:def_kmn}
 {\KMN}(i) = \frac{1}{L}\bm{e}_i^T\bm{C}{\bm 1}_L, \qquad i=1,\dots,N. %, \qquad {\bm 1}_L\in\R^L.
 \end{equation}
 %where ${\bm 1}_L=\left[1,\ldots,1\right]^\top\in\R^L.$
%
The quantity ${\KMN}(i)$ describes the centrality of node $i$ over the temporal evolution of the multiplex network.

 We adopt standard terminology \cite{taylor2017eigenvector,bergermann2022fast} to denote temporal Katz \emph{layer} centrality (marginal layer centrality) at time point $\ell$ as
 \begin{equation}\label{eq:def_kml}
 \KML(\ell) = \bm{1}_n^\top\bm{C}\bm{e}_\ell, \qquad \ell=1,\dots,L.
 \end{equation}
 The quantity $\KML(\ell)$ may be of interest as a scalar measure of the overall network communicability at every time snapshot, i.e., reflecting the impact of up- or downdates.

%%%%%%%%%%%%%%%%%%%%%%%%%%%%%%%%%%%
\section{Structural properties of the multiplex matrix inverse}\label{sec:structural_analysis}
We now discuss some properties of the inverse of the block matrix $(\bm{I} - \bm{\mathcal{M}}_L)$ that will be convenient for making computations for large-scale networks efficient.
For notational convenience, we denote the matrix resolvent function of a given matrix $\bm A$ by
\begin{equation}\label{eqn:resolvent}
    \K(\bm{A}) := (\bm{I} - \bm{A})^{-1}.
\end{equation}
We start with the following proposition, which is a generalization of \cite[Theorem 3.1.]{fenu2017block}\vs{, where it was stated for $W_i=I$ for all $i$'s}.
%\todo[inline]{VS: to be checked}

\begin{proposition}\label{prop:block_inverse}
Let $\bm{X}_{k,\ell}\in{\mathbb R}^{N \times N}$ be the $(k,\ell)$ block of $\K(\bm{\mathcal{M}}_L)$. Then for $k\leq \ell$, $\ell = 1, \dots, L$, it holds that
\begin{equation}
\label{eq:form_upper_triangular_blocks}
\bm{X}_{k,\ell} = 
\left( \prod_{i=k}^{\ell-1} \K(\bm{A}_i ) \bm{W}_i \right) \K(\bm{A}_\ell ).
\end{equation}
%while the other blocks are zero
\end{proposition}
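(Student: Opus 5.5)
The plan is to verify the claimed block formula directly: build the candidate block upper-triangular matrix $\bm{\mathcal{X}}$ whose $(k,\ell)$ block is $\bm{X}_{k,\ell}$ as in \eqref{eq:form_upper_triangular_blocks} for $k\le \ell$ and $\bm{0}$ for $k>\ell$, and show $(\bm{I}-\bm{\mathcal{M}}_L)\bm{\mathcal{X}}=\bm{I}$. First we need each diagonal block $\bm{I}-\bm{A}_i$ to be invertible, so that $\K(\bm{A}_i)$ exists. This follows from \eqref{eq:def_alpha}: $\rho(\bm{A}_i)=\alpha\rho(\widetilde{\bm{A}}_i)\le c_\alpha<1$. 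Since $\bm{I}-\bm{\mathcal{M}}_L$ is block upper triangular with invertible diagonal blocks, it is invertible, and its inverse is block upper triangular. This already gives that the blocks with $k>\ell$ vanish, and it means a one-sided inverse check is enough.

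Next, write $\bm{I}-\bm{\mathcal{M}}_L$ block-wise. Its diagonal blocks are $\bm{I}-\bm{A}_k$, its superdiagonal blocks are $-\bm{W}_k$, and all other blocks are zero. The $(k,\ell)$ block of the product $(\bm{I}-\bm{\mathcal{M}}_L)\bm{\mathcal{X}}$ is therefore
\[
(\bm{I}-\bm{A}_k)\bm{X}_{k,\ell}-\bm{W}_k\bm{X}_{k+1,\ell},
\]
where the second term is absent when $k=L$. There are three cases.
\begin{itemize}
\item For $k>\ell$, both $\bm{X}_{k,\ell}$ and $\bm{X}_{k+1,\ell}$ are zero, so the block is zero.
\item For $k=\ell$, the product in \eqref{eq:form_upper_triangular_blocks} is empty, so $\bm{X}_{\ell,\ell}=\K(\bm{A}_\ell)$. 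Also $\bm{X}_{\ell+1,\ell}=\bm{0}$. The block is $(\bm{I}-\bm{A}_\ell)\K(\bm{A}_\ell)=\bm{I}$.
\item For $k<\ell$, peel off the first factor of the ordered product to get the recursion $\bm{X}_{k,\ell}=\K(\bm{A}_k)\bm{W}_k\bm{X}_{k+1,\ell}$. Multiplying on the left by $\bm{I}-\bm{A}_k$ gives $(\bm{I}-\bm{A}_k)\bm{X}_{k,\ell}=\bm{W}_k\bm{X}_{k+1,\ell}$, so the block vanishes.
\end{itemize}
Hence $(\bm{I}-\bm{\mathcal{M}}_L)\bm{\mathcal{X}}=\bm{I}$, and $\bm{\mathcal{X}}=\K(\bm{\mathcal{M}}_L)$.

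An equivalent constructive route is block back-substitution on $(\bm{I}-\bm{\mathcal{M}}_L)\bm{\mathcal{X}}=\bm{I}$, solving column by column from the bottom block row up. This produces the recursion above and then the formula by induction on $\ell-k$. It mirrors the $\bm{W}_i=\bm{I}$ argument in \cite[Theorem 3.1.]{fenu2017block}.

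There is no real obstacle here. The only point needing care is the noncommutative ordering of the product: the factors $\K(\bm{A}_i)\bm{W}_i$ must appear with $i$ increasing from left to right, followed by $\K(\bm{A}_\ell)$ on the right. Peeling the \emph{leftmost} factor is what matches the left multiplication by $\bm{I}-\bm{A}_k$. Unlike the case $\bm{W}_i=\bm{I}$, the $\bm{W}_i$ do not commute with the resolvents, so this ordering is essential.
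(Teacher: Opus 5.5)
Your proof is correct and rests on the same block-row identity as the paper's proof, namely $(\bm{I}-\bm{A}_k)\bm{X}_{k,\ell}=\bm{W}_k\bm{X}_{k+1,\ell}$ for $k<\ell$ together with $\bm{X}_{\ell,\ell}=\K(\bm{A}_\ell)$. The paper derives this identity by block back substitution on the last block column, after observing that block column $\ell$ depends only on the leading part $\bm{\mathcal{M}}_\ell$; you instead verify it for the closed-form candidate in all block columns at once, which makes explicit the induction behind the paper's ``and so on'' and avoids that principal-submatrix reduction.
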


The proof of this result is given in Appendix~\ref{proof:block_inverse}.
Throughout the paper we make use of the following fact.
\begin{lemma}\label{lem:inv_prod}
For nonsingular matrices $\bm{I} - \bm{A}$ and $\bm{I} - (\bm{A} - \bm{W})$, 
%the following relation holds
%\begin{equation}\label{eq:product_relation}
\vs{it holds that $\K(\bm{A}) \bm{W} \K(\bm{A} + \bm{W}) =
\K(\bm{A} + \bm{W}) - \K(\bm{A})$}.
%,
%K(\bm{A}) \bm{W} K(\bm{A} - \bm{W}) = K(\bm{A} - \bm{W}) - K(\bm{A}).
%\end{equation}
\end{lemma}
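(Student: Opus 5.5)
The identity in the statement of Lemma~\ref{lem:inv_prod} is the second resolvent identity, and I would prove it by a one-line factorization. The idea is to write the difference of two inverses as one inverse times the difference of the original matrices times the other inverse. For nonsingular $\bm{P}$ and $\bm{Q}$ one always has $\bm{Q}^{-1}-\bm{P}^{-1} = \bm{P}^{-1}(\bm{P}-\bm{Q})\bm{Q}^{-1}$, which follows by expanding the right-hand side as $\bm{P}^{-1}\bm{P}\bm{Q}^{-1} - \bm{P}^{-1}\bm{Q}\bm{Q}^{-1}$.

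I would take $\bm{P} = \bm{I}-\bm{A}$ and $\bm{Q} = \bm{I}-(\bm{A}+\bm{W})$, so that $\bm{P}^{-1}=\K(\bm{A})$ and $\bm{Q}^{-1}=\K(\bm{A}+\bm{W})$ by \eqref{eqn:resolvent}. Then $\bm{P}-\bm{Q} = \bm{W}$, and the identity above becomes
\begin{equation*}
\K(\bm{A}+\bm{W}) - \K(\bm{A}) = \K(\bm{A})\,\bm{W}\,\K(\bm{A}+\bm{W}),
\end{equation*}
which is exactly the claim.

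The only point needing care is the hypothesis. The argument requires $\bm{I}-\bm{A}$ and $\bm{I}-(\bm{A}+\bm{W})$ to be nonsingular, whereas the lemma as written assumes $\bm{I}-(\bm{A}-\bm{W})$ is nonsingular. This looks like a sign typo, since the conclusion involves $\K(\bm{A}+\bm{W})$, which must exist for the statement to make sense. I would therefore state the nonsingularity of $\bm{I}-(\bm{A}+\bm{W})$ explicitly.

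In the intended application, $\bm{A}=\bm{A}_\ell$ and $\bm{A}+\bm{W}=\bm{A}_{\ell+1}$. Both inverses then exist automatically because of the scaling \eqref{eq:def_alpha}: it gives $\rho(\bm{A}_\ell) \le c_\alpha < 1$ for every $\ell$.

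The companion version with $-\bm{W}$ follows by replacing $\bm{W}$ with $-\bm{W}$ and uses the hypothesis as literally stated. It reads $\K(\bm{A})\bm{W}\K(\bm{A}-\bm{W}) = \K(\bm{A}) - \K(\bm{A}-\bm{W})$, obtained by multiplying both sides by $-1$. I do not expect any step to be an obstacle.
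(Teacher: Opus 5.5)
Your proof is correct and is the same one-line factorization the paper uses, namely $(\bm{I}-\bm{A}-\bm{W})^{-1}-(\bm{I}-\bm{A})^{-1} = (\bm{I}-\bm{A})^{-1}\big[(\bm{I}-\bm{A})-(\bm{I}-\bm{A}-\bm{W})\big](\bm{I}-\bm{A}-\bm{W})^{-1} = \K(\bm{A})\bm{W}\K(\bm{A}+\bm{W})$. You are also right that the hypothesis should require $\bm{I}-(\bm{A}+\bm{W})$ to be nonsingular, since the paper's own proof inverts exactly that matrix.
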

%\begin{proof} 
\vs{{\it Proof.} Recalling the definition of $\K(\cdot)$ in
(\ref{eqn:resolvent}) we have
%\todo[inline]{KB @ VS: I changed to K-notation without touching the proof, which I believe you wanted to shorten.}
%We have
\begin{eqnarray*}
(\bm{I} - \bm{A} - \bm{W})^{-1} - (\bm{I} - \bm{A})^{-1} & =&
(\bm{I} - \bm{A})^{-1} \big[ (\bm{I} - \bm{A}) - (\bm{I} - \bm{A} - \bm{W}) \big] (\bm{I} - \bm{A} - \bm{W})^{-1}\\
&=&
(\bm{I} - \bm{A})^{-1} \bm{W} (\bm{I} - \bm{A} - \bm{W})^{-1}.
\qquad \square
\end{eqnarray*}
%\begin{align*}
%& (\bm{I} - \bm{A})^{-1} \bm{W} (\bm{I} - \bm{A} - \bm{W})^{-1} \\
%&= (\bm{I} - \bm{A})^{-1} \big[ (\bm{I} - \bm{A}) - (\bm{I} - \bm{A} - \bm{W}) \big] (\bm{I} - \bm{A} - \bm{W})^{-1} \\
%&= (\bm{I} - \bm{A})^{-1} (\bm{I} - \bm{A}) (\bm{I} - \bm{A} - \bm{W})^{-1} - (\bm{I} - \bm{A})^{-1} (\bm{I} - \bm{A} - \bm{W}) (\bm{I} - \bm{A} - \bm{W})^{-1} \\
%&= \bm{I} (\bm{I} - \bm{A} - \bm{W})^{-1} - (\bm{I} - \bm{A})^{-1} \bm{I} 
%= (\bm{I} - \bm{A} - \bm{W})^{-1} - (\bm{I} - \bm{A})^{-1}.
%\end{align*}
}
%\end{proof}

% Identity matrix plays no role
% (The identity matrix plays no role, the relation also holds for $\mathcal{K}(A) = A^{-1}$).
% \begin{proof}
% Collecting from the left and right of \eqref{eq:product_relation} the two inverses, we have
% \begin{align*}
% K(\bm{A} - \bm{W}) - K(\bm{A}) & = (\bm{I} - (\bm{A} - \bm{W}))^{-1} - (\bm{I} - \bm{A})^{-1}\\
% & = (\bm{I} - \bm{A})^{-1} \left( \bm{I} - \bm{A} - (\bm{I} - (\bm{A} - \bm{W})) \right) (\bm{I} - (\bm{A} - \bm{W}))^{-1}\\
% & = (\bm{I} - \bm{A})^{-1} \bm{W} (\bm{I} - (\bm{A} - \bm{W}))^{-1}.
% \end{align*}
% \end{proof}

%Recalling that 
\vs{Since} $\bm{A}_{\ell}+\bm{W}_\ell=\bm{A}_{\ell+1}$, we can write
\begin{equation}\label{eq:resolvent_product}
\K(\bm{A}_\ell)\bm{W}_\ell\K(\bm{A}_{\ell+1})=\K(\bm{A}_{\ell+1})-\K(\bm{A}_\ell).
\end{equation}
for any $\ell=1, \ldots, L-1$.
Repeated application to \eqref{eq:form_upper_triangular_blocks}, for $k<\ell\le L$, leads to the following expression for the blocks of the matrix $\K(\bm{\mathcal{M}}_L)$:
{\small 
\begin{equation*}
\bm{X}_{k,\ell}=\begin{cases} (\K(\bm{A}_{k+1})-\K(\bm{A}_{k}))\displaystyle{\prod_{i=1}^{\frac{\ell-k-1}2}\bm{W}_{k+2i-1}}(\K(\bm{A}_{k+2i+1})-\K(\bm{A}_{k+2i}))
& \text{$\ell-k$ odd,}\\

\K(\bm{A}_k)\bm{W}_k\bm{X}_{k+1,\ell}
&\text{$\ell-k$ even.} 
\end{cases}
\end{equation*}
}
Note that if some $\bm{W}_\ell$ are zero, then \vs{$\bm{\mathcal{M}}_L$ becomes block diagonal with variable block sizes, so that} certain blocks above the block super-diagonal of $\K(\bm{\mathcal{M}}_L)$ are also zero. 
%Before we discuss further properties we look at a small example.

\begin{example}\label{ex:four_block_system}
As a small example of this structure, consider the matrix
\begin{equation}\label{eq:4x4_example}
\bm{\mathcal{M}}_4 =
\begin{bmatrix}
\bm{A}_1&\bm{W}_1&\bm{0}&\bm{0}\\
\bm{0}&\bm{A}_2&\bm{W}_2&\bm{0} \\
\bm{0}&\bm{0}&\bm{A}_3 & \bm{W}_3\\
\bm{0}&\bm{0}&\bm{0}&\bm{A}_4
\end{bmatrix}.
\end{equation}
Then 
{\scriptsize
\begin{eqnarray*}
&&\K(\bm{\mathcal{M}}_4)=\\
&& \begin{bmatrix}
\K(\bm{A}_1)&\K(\bm{A}_2)-\K(\bm{A}_1)&\K(\bm{A}_1)\bm{W}_1(\K(\bm{A}_3)-\K(\bm{A}_2))&(\K(\bm{A}_2)-\K(\bm{A}_1))\bm{W}_2(\K(\bm{A}_4)-\K(\bm{A}_3))\\
\bm{0}&\K(\bm{A}_2)&\K(\bm{A}_3)-\K(\bm{A}_2)&\K(\bm{A}_2)\bm{W}_2(\K(\bm{A}_4)-\K(\bm{A}_3)) \\
\bm{0}&\bm{0}&\K(\bm{A}_3) & \K(\bm{A}_4)-\K(\bm{A}_3)\\
\bm{0}&\bm{0}&\bm{0}&\K(\bm{A}_4)
\end{bmatrix}.
\end{eqnarray*}
}
\end{example}

%For nonzero $\bm{W}_\ell$, we analyze the contribution of $\bm{X}_{\ell,.}$ to $K(\bm{\mathcal{M}}_L)\bm{1}$ in the following.

In the following, we analyze the entry decay of the blocks $\bm{X}_{k,\ell}$. These estimates will allow us to derive a suitable truncation strategy of the inverse to make computations more efficient.
%we bound the entries $\bm{X}_{k,\ell}\bm{1}$ with $\ell>k+1$ and $\bm{X}_{k,\ell}$ defined in \eqref{eq:form_upper_triangular_blocks} in order to motivate a truncation strategy discarding any contributions of $(\bm{I}-\bm{\mathcal{M}}_L)^{-1}\bm{1}$ above the first super-diagonal. Given the block matrix \ref{eq:block_matrix},
For the remainder of the section, we let $\bm{W}_\ell=\pm(\bm{e}_{\il}\bm{a}_{\il}^T+\bm{a}_{\il}\bm{e}_{\il}^T)$, where $\il$ is the index of the up- or downdated edge at time $\ell$. %In the following, we use the short-hand notation $i$ to indicate the graph node/vertex $v_i$, so that the shortest distance between the nodes $v_i$ and $v_j$ is denoted by $d(i,j)$.

%in this case $\bm{W}_\ell=\bm{A}_{\ell+1}-\bm{A}_\ell$  be  nodes deleted or recovered from the network represented by the adjacency matrix $\bm{A}_1$, whose indexes are $\im$, for $\ell=1,\dots,L$ 

% {\color{red}and  which are respectevely at shortest path distance from each other $d(\il ,\illl)$.}
%Since edge can be deleted or restored, $\bm{W}_\ell=\pm(\bm{e}_{\il}\bm{a}_{\il}^T+\bm{a}_{\il}\bm{e}_{\il}^T)$, where $\bm{a}_{\il}=\bm{A}_{\ell}\bm{e}_{\il}$.
%$\bm{W}_2=\pm(\bm{e}_{\ii}\bm{a}_{\ii}^T+\bm{a}_{\ii}\bm{e}_{\ii}^T)$, where $\bm{a}_{\ii}=\bm{A}_2\bm{e}_{\ii}$.  Let $\bm{A}_2$ be the adjacency matrix of the network after the node $\i$ has been removed.

We will denote with $s(i,j)$ the shortest path\footnote{A path between any two nodes of index $s_1$ and $s_2$ is a sequence of edges $p(s_1,s_2)=\{(u_1,v_1),\dots,(u_k,v_k)\}$ such that $u_1=s_1$, $v_k=s_2$, $v_1\neq v_2\neq \dots \neq v_k$ and $v_i=u_{i+1}$. The length of a path is the number of edges in a path, and we will denote it as $|(p(s,k))|$.} between the nodes $i$ and $j$, and with $d(i,j)$ its length, which is thus the distance between $i$ and $j$. %  $|s(i,j)|$.

\begin{lemma}\label{lemma:lemma_1}
Let $d(i, j)$ be the shortest path distance between nodes $i$ and $j$. For any neighbor $i_s$ of $i$, it holds that
\begin{equation}
|d(i, j) - d(i_s, j)| \leq 1.
\end{equation}
% Let $g_i$ be the degree of node $i$ and let $d(i,j)$ be the shortest path length between $i$ and a given $j$. Let $i_s$, $s=1,\dots,g_i$ be any node adjacent to the node $i$. Then 
% \begin{equation*}
% d(i,j)-1\leq d(i_s,j)\leq d(i,j)+1.
% \end{equation*}
\end{lemma}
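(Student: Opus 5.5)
The plan is to apply the triangle inequality for the shortest-path metric $d$ twice, using only that $i$ and $i_s$ are adjacent, so that $d(i,i_s)=d(i_s,i)=1$. Since the graph is undirected (the adjacency matrices are symmetric, cf.~\Cref{sec:edge_networks}), $d$ is symmetric. We assume $j$ is reachable from $i$; otherwise, with the convention $d=\infty$, both distances are infinite and the statement is read accordingly.

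For the upper bound on $d(i_s,j)$, take a shortest path $s(i,j)$ of length $d(i,j)$. Prepend the edge $(i_s,i)$ to it. This gives a walk from $i_s$ to $j$ of length $d(i,j)+1$. Any walk contains a path between its endpoints of no greater length, obtained by deleting cycles. Hence $d(i_s,j)\le d(i,j)+1$.

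For the lower bound, exchange the roles of $i$ and $i_s$. Prepending the edge $(i,i_s)$ to a shortest path from $i_s$ to $j$ gives a walk from $i$ to $j$ of length $d(i_s,j)+1$. Therefore $d(i,j)\le d(i_s,j)+1$. Combining the two inequalities gives $-1\le d(i,j)-d(i_s,j)\le 1$, which is the claim.

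The only point needing care is the passage from walk to path, because the paper's footnote defines paths with distinct vertices. If $i_s$ already lies on $s(i,j)$, the concatenation is not a path but merely a walk. In that case we shortcut it by starting at the occurrence of $i_s$ on $s(i,j)$, which gives an even shorter path. So in either case the bound $d(i_s,j)\le d(i,j)+1$ holds. This is routine, and I expect no genuine obstacle.
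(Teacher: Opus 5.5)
Your proof is correct and follows the paper's argument: apply the triangle inequality for the shortest-path metric in both directions using $d(i,i_s)=1$. Your remarks on reducing walks to paths and on the unreachable case only spell out why $d$ satisfies the triangle inequality, which the paper takes as given.
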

% \todo[inline]{VS: is this proof needed? any classical reference?}
\begin{proof}
Since $i$ and $i_s$ are adjacent, $d(i, i_s) = 1$. The result follows directly from the triangle inequality for graph metrics: $d(i_s, j) \leq d(i_s, i) + d(i, j)$ and $d(i, j) \leq d(i, i_s) + d(i_s, j)$.
\end{proof}

% Let $s(i_s,j)$ be any shortest path between $i_s$ and $j$. It cannot be that $d(i_s,j)=|s(i_s,j)|<d(i,j)-1$. Suppose it was the case. We can always consider the path going from $i$ to $i_s$ composed with $s(i_s,j)$. It's length is $|(i,i_s)\cup s(i_s,j)|=1+|s(i_s,j)|\leq d(i,j)-1< d(i,j)$ which is absurd. Instead, suppose $d(i_s,j)>d(i,j)+1$ and let $s(i,j)$ be any shortest path going from  $i$ to $j$. We can always consider the path going from $i_s$ to $i$ composed with the path $s(i,j)$. It.s length is $|(i_s,i)\cup s(i,j)|=d(i,j)+1$ which is absurd.
%\end{proof}

We recall the following result from \cite{demko1984decay}.
\begin{lemma}\label{lem:lemma2} 
Let $\bm{H}$ be a symmetric positive definite matrix with spectrum contained in $[a, b]$, and let \vs{its} condition number be $\kappa = b/a$. Let $d(i, j)$ denote the shortest path distance between nodes $i$ and $j$ in the graph representation of $\bm{H}$. Then, the entries of the inverse satisfy:
\begin{equation}
\label{eq:bound_entries_of_inverse}
| [\bm{H}^{-1}]_{ij} | \leq C \lambda^{d(i,j)},
\end{equation}
where $\lambda = \frac{\sqrt{\kappa}-1}{\sqrt{\kappa}+1}$ and $C=\frac{(1 + \sqrt{\frac{b}{a}})^2}{2b}$.
\end{lemma}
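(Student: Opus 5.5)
The plan is to follow the classical polynomial-approximation argument of Demko, Moss and Smith. It combines two ingredients: a sparsity fact for polynomials in $\bm{H}$, and the explicit Chebyshev best-approximation error for the function $x\mapsto 1/x$ on $[a,b]$.

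\emph{Sparsity step.} First I will check that if $p_k$ is any polynomial of degree $k$, then $[p_k(\bm{H})]_{ij}=0$ whenever $k<d(i,j)$. The reason is that $[\bm{H}^m]_{ij}$ is a sum over walks of length $m$ from $i$ to $j$ in the graph of $\bm{H}$. Such walks cannot exist when $m<d(i,j)$. This is the same walk-counting interpretation used above for $\bm{A}^p$.

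\emph{Approximation step.} For fixed $i\neq j$ with $d=d(i,j)\ge 1$, take $p_{d-1}$ to be the best uniform approximation to $1/x$ on $[a,b]$ among polynomials of degree at most $d-1$. By the sparsity step, $[\bm{H}^{-1}]_{ij}=[\bm{H}^{-1}-p_{d-1}(\bm{H})]_{ij}$. This gives
\[
|[\bm{H}^{-1}]_{ij}|\le \|\bm{H}^{-1}-p_{d-1}(\bm{H})\|_2=\max_{\lambda\in\sigma(\bm{H})}|1/\lambda-p_{d-1}(\lambda)|\le \max_{x\in[a,b]}|1/x-p_{d-1}(x)|.
\]
The equality uses the symmetry of $\bm{H}$ via the spectral theorem. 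The remaining task is to bound this uniform error.

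\emph{The obstacle: the Chebyshev error.} I expect the main work to lie in showing that the best uniform approximation error of $1/x$ on $[a,b]$ by polynomials of degree at most $k$ equals
\[
\frac{(1+\sqrt{\kappa})^2}{2b}\,\lambda^{k+1},\qquad \lambda=\frac{\sqrt{\kappa}-1}{\sqrt{\kappa}+1}.
\]
I would proceed as follows.
\begin{itemize}
\item Map $[a,b]$ affinely to $[-1,1]$, so that $1/x$ becomes $c/(t-t_0)$ with pole $t_0=-(b+a)/(b-a)<-1$.
\item Use the classical Chebyshev/Bernstein closed form for the best approximation of a simple pole. Write $t_0=\tfrac12(z+z^{-1})$ with $|z|>1$. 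The error function has an equioscillating form, and its extremal value is proportional to $z^{-(k+1)}/(t_0^2-1)$ up to a factor built from $z$.
\item Verify that $1/|z|=\lambda$ and that the prefactor simplifies to $(1+\sqrt{\kappa})^2/(2b)$.
\end{itemize}
As a sanity check, for $k=0$ the formula gives $(b-a)/(2ab)$. This is indeed the best constant approximation error for $1/x$ on $[a,b]$.

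Alternatively, if an exact equality is unnecessary, I would only prove an upper bound. This can be done by bounding the error of the degree-$k$ truncated expansion of $1/x$ in shifted Chebyshev polynomials, whose coefficients decay geometrically like $\lambda^m$, and summing the geometric tail. That route might give a slightly different constant. With $k=d-1$, either version yields $|[\bm{H}^{-1}]_{ij}|\le C\lambda^{d(i,j)}$.

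Finally, the diagonal case $d(i,j)=0$ needs separate treatment. Here $|[\bm{H}^{-1}]_{ii}|\le 1/a$, and one must compare this with $C=(1+\sqrt{\kappa})^2/(2b)$. This comparison holds only for $\sqrt{\kappa}\le 1+\sqrt2$. I would therefore either restrict the displayed constant to off-diagonal entries, which is all that is used for decay estimates, or replace $C$ by $\max\{1/a,C\}$, as Demko et al.\ do.
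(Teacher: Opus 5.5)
Your argument is exactly the Demko--Moss--Smith proof that the paper relies on by citation; the paper gives no proof of its own. Polynomials of degree $<d(i,j)$ have a zero $(i,j)$ entry, and the Chebyshev best-approximation error of $1/x$ on $[a,b]$ is $\frac{(1+\sqrt{\kappa})^2}{2b}\lambda^{k+1}$, which you normalized and sanity-checked correctly. Your diagonal caveat is also right: the printed constant drops the $\max\{a^{-1},\cdot\}$ of the original theorem, and $\bm{H}=\mathrm{diag}(a,b)$ with $\kappa>3+2\sqrt{2}$ violates the bound at $i=j$, so the lemma as stated holds only for $i\neq j$ or with $C$ replaced by $\max\{1/a,C\}$.
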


\begin{remark}
This result is classical in the analysis of sparse matrix inverses. The exponential decay rate $\lambda$ depends solely on the condition number $\kappa$.
\end{remark}
% \begin{lemma}\label{lem:lemma2} 
% Let $\bm{H}$ be a symmetric positive definite matrix with $[a,b]$ being the smallest interval containing its spectrum, and let
% $\kappa=b/a$. % $\sigma(\bm{A})$,.
% Further, let $d(i,j)$ denote the shortest path distance between the nodes $i$ and $j$ in the graph representation of $\bm{H}$. Then, it holds that
% \begin{equation}\label{eq:bound_entries_of_inverse}
% |[\bm{H}^{-1}]_{ij}|\le C \lambda^{d(i,j)}.
% \end{equation}
% with $0<\lambda=\frac{\sqrt{\kappa}-1}{\sqrt{\kappa}+1}$ and $C=\max \{\frac{1}{a},\frac{(1+\sqrt{\kappa})^2}{2b}\}$.
% \end{lemma}

We apply \eqref{eq:bound_entries_of_inverse} to the diagonal blocks $\K(\bm{A}_\ell)$ of $\K(\bm{\mathcal{M}}_L)$.
Thanks to the scaling in (\ref{eq:def_alpha}), 
%in which case the following worst-case bound on the value of $\lambda$ in \eqref{eq:bound_entries_of_inverse} holds. Each $\bm{A}_\ell$ is such that their spectra 
the spectrum of $\bm{A}_\ell$ is contained in $(-c_\alpha,+c_\alpha]$ for some $0<c_\alpha<1$, so that the spectrum of each $\bm{I}-\bm{A}_\ell$ is contained in $[1-c_\alpha,1+c_\alpha)$.  
    Then, \eqref{eq:bound_entries_of_inverse} holds for $\bm{H}=\bm{I}-\bm{A}_\ell$, $\ell=1,\dots,L$, where $\lambda$ satisfies
    \begin{equation}
        \lambda\leq \frac{\sqrt{\frac{1+c_\alpha}{1-c_\alpha}}-1}{\sqrt{\frac{1+c_\alpha}{1-c_\alpha}}+1}. \qquad 
    \end{equation}

%\begin{proof}
   % By construction, the matrices $\bm{A}_\ell$ are symmetric, indefinite, non-negative, and irreducible and their spectra are contained in the interval $(-c_\alpha,c_\alpha]$.
   % Consequently, the spectra of $(\bm{I}-\bm{A}_\ell)$ are contained in the interval $[1-c_\alpha,1+c_\alpha)$, i.e., in the notation of \Cref{lemma:lemma_1}, we have $a=1-c_\alpha$ and $b\leq 1+c_\alpha$ and hence $\frac{b}{a}\leq \frac{1+c_\alpha}{1-c_\alpha}>1$.
    %Inserting this into the definition of $\lambda$ in \eqref{eq:bound_entries_of_inverse} completes the proof.
   % \end{proof}

For $c_\alpha\rightarrow 1$ it holds that  $\lambda\rightarrow 1$. In this case, 
the  Katz centrality converges to the
eigenvector centrality \cite{benzi2015limiting}.
%However, Benzi and Klymko showed that Katz centrality converges to eigenvector centrality in the limit $c_\alpha\rightarrow 1$, where the latter is easily computed by a power method.
For some typical parameter choices such as $c_\alpha=0.5, 0.8$, and $c_\alpha=0.9$, we obtain quite favorable values of the upper bound, that is
$0.268, 0.5$ and $0.627$, respectively.
%$\lambda\leq 0.268$, $\lambda\leq 0.5$, and $\lambda\leq 0.627$, respectively.
For $c_\alpha=0.99$  it holds that $\lambda\leq 0.868$.

%In a first step, we consider the example network from \eqref{eq:matpert} and bound the quantity $\bm{X}_{1,3}\bm{1}$.

The following bound is the main result of this section. We show that $\|{\bm X}_{k,\ell}\textbf{1}\|_2$ decays in a way that depends on the distance between the involved nodes.
This forms the basis for the truncation strategies proposed in \Cref{sec:computations} that omit negligible contributions $\|{\bm X}_{k,\ell}\textbf{1}\|_2$ to linear systems involving $(\bm{I} - \bm{\mathcal{M}}_L)$ or $(\bm{I} - \bm{\mathcal{M}}_{L+1})$.

\begin{proposition}\label{prop:block_decay} 
Let $\bm{X}_{k,\ell}$ be as in (\ref{eq:form_upper_triangular_blocks}) and let $[a,b]$ be the smallest interval containing the spectrum of $\bm{A}_m$ for every $m=k,\dots,\ell$. Let $[\bm{c}]_\ell$ be the maximum entry of the Katz centrality vector of ${\bm A}_\ell$ and $g_{\max}$ be the maximum degree of the network. \vs{Finally, let $i_m$ be the index of the removed/restored edge at time $m$.} Then

\begin{equation}\label{eq:bound_entries}
||{\bm X}_{k,\ell}\textbf{1}||_2\le \bar{C}\lambda^{\sum_{m=k}^{\ell-2}(d(\im,\imm)-1)},
\end{equation}
with $\lambda=\frac{\sqrt{\frac{1+c_\alpha}{1-c_\alpha}}-1}{\sqrt{\frac{1+c_\alpha}{1-c_\alpha}}+1}$ and {$\bar{C}=\frac{1}{1-c_{\alpha}}(2(1+\alpha g_{\max})C)^{\frac{\ell}{2}}[\bm{c}_\ell]_{\max}$}.
\end{proposition}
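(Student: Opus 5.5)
Starting from Proposition~\ref{prop:block_inverse}, I write
\[
\bm{X}_{k,\ell}\bm{1}=\K(\bm{A}_k)\bm{W}_k\K(\bm{A}_{k+1})\bm{W}_{k+1}\cdots\K(\bm{A}_{\ell-1})\bm{W}_{\ell-1}\,\bm{c}_\ell ,
\]
where $\bm{c}_\ell=\K(\bm{A}_\ell)\bm{1}$ is the single-layer Katz vector of $\bm{A}_\ell$, so $\|\bm{c}_\ell\|_\infty=[\bm{c}_\ell]_{\max}$. The plan is to exploit the rank-two structure $\bm{W}_m=\pm(\bm{e}_{i_m}\bm{a}_{i_m}^T+\bm{a}_{i_m}\bm{e}_{i_m}^T)$. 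Each inner factor $\bm{W}_{m}\K(\bm{A}_{m+1})\bm{W}_{m+1}$ contains scalar entries of the form $\bm{u}^T\K(\bm{A}_{m+1})\bm{v}$ with $\bm{u},\bm{v}\in\{\bm{e}_{i_m},\bm{a}_{i_m}\}\times\{\bm{e}_{i_{m+1}},\bm{a}_{i_{m+1}}\}$. These are exactly the quantities controlled by Lemma~\ref{lem:lemma2}, applied to $\bm{H}=\bm{I}-\bm{A}_{m+1}$ as in the discussion following that lemma.

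The key steps are as follows. First, I expand the product of rank-two factors into a sum of at most $2^{\ell-k}$ chains
\[
\pm\,\K(\bm{A}_k)\bm{u}_k\,(\bm{v}_k^T\K(\bm{A}_{k+1})\bm{u}_{k+1})\cdots(\bm{v}_{\ell-1}^T\bm{c}_\ell),
\]
where each $(\bm{u}_m,\bm{v}_m)$ is either $(\bm{e}_{i_m},\bm{a}_{i_m})$ or $(\bm{a}_{i_m},\bm{e}_{i_m})$. Second, I bound each middle scalar $|\bm{v}_m^T\K(\bm{A}_{m+1})\bm{u}_{m+1}|$. The vector $\bm{a}_{i}=\alpha\widetilde{\bm{A}}\bm{e}_i$ is supported on the neighbours of $i$, has at most $g_{\max}$ entries and each entry equals $\alpha$. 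Writing $\bm{e}_i$ or $\bm{a}_i$ as a combination of unit vectors at $i$ or its neighbours, Lemma~\ref{lem:lemma2} gives terms $C\lambda^{d(p,q)}$ with $p\in\{i_m\}\cup\mathcal N(i_m)$ and $q\in\{i_{m+1}\}\cup\mathcal N(i_{m+1})$. By Lemma~\ref{lemma:lemma_1}, $d(p,q)\ge d(i_m,i_{m+1})-2$, but with more care only one endpoint can be a neighbour in a given factor. This is because $\bm{e}/\bm{a}$ alternate along the chain, so within $\bm{v}_m^T(\cdot)\bm{u}_{m+1}$ the pair is either $(\bm{a},\bm{e})$ or $(\bm{e},\bm{a})$ up to the both-$\bm{a}$ cases. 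I aim to pair the factors so that each contributes $(1+\alpha g_{\max})C\lambda^{d(i_m,i_{m+1})-1}$. This is where the exponent $d(\im,\imm)-1$ and the factor $(1+\alpha g_{\max})C$ come from, and the two-term sum from the $\pm$ choice gives the $2$.

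Third, I bound the remaining outer pieces. The left factor satisfies $\|\K(\bm{A}_k)\bm{u}_k\|_2\le\|\K(\bm{A}_k)\|_2\|\bm{u}_k\|_2\le \frac{1}{1-c_\alpha}\cdot\max(1,\alpha\sqrt{g_{\max}})$, which supplies the prefactor $1/(1-c_\alpha)$ after absorbing constants. The right end satisfies $|\bm{v}_{\ell-1}^T\bm{c}_\ell|\le(1+\alpha g_{\max})[\bm{c}_\ell]_{\max}$. Multiplying these over $m=k,\dots,\ell-2$ and summing the chains gives $(2(1+\alpha g_{\max})C)^{\#}$ times $\lambda^{\sum(d(\im,\imm)-1)}$. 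The power $\#\le\ell-k$ is crudely bounded; I expect it is loosely written as $\ell/2$ in the statement, following the odd/even pairing of resolvent differences shown before Example~\ref{ex:four_block_system}. If needed, I will use that grouping, in which pairs $\K(\bm{A}_{m})\bm{W}_m\K(\bm{A}_{m+1})$ collapse to differences of resolvents via \eqref{eq:resolvent_product}, reducing the number of genuine decay factors to about half.

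The main obstacle is the bookkeeping in the second step. I need each scalar factor to produce exactly $\lambda^{d(\im,\imm)-1}$ rather than $\lambda^{d-2}$, while keeping the constant in the form $(1+\alpha g_{\max})C$ with exponent $\ell/2$. I will first try to handle this by using the pairing from \eqref{eq:resolvent_product} to combine consecutive factors, and by bounding sums over neighbours by $g_{\max}$ times the worst term, with Lemma~\ref{lemma:lemma_1} shifting distances by at most one.
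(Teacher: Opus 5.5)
Your route is the paper's. The chain expansion is the entrywise expansion of $\K(\bm{A}_k)\bm{U}_k\big(\prod_{m=k}^{\ell-2}\bm{T}_m\big)\widehat{\bm{U}}_{\ell-1}^T\bm{c}_\ell$, where $\bm{W}_m=\bm{U}_m\widehat{\bm{U}}_m^T$ and $\bm{T}_m=\widehat{\bm{U}}_m^T\K(\bm{A}_{m+1})\bm{U}_{m+1}\in\R^{2\times2}$, and your boundary estimates are the paper's. However, the step you flag is a genuine gap, and the fix you propose is false.

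The vectors $\bm{e}$ and $\bm{a}$ do not alternate along a chain, because the choices of $(\bm{u}_m,\bm{v}_m)$ and $(\bm{u}_{m+1},\bm{v}_{m+1})$ are independent. So $(\bm{v}_m,\bm{u}_{m+1})=(\bm{a}_{i_m},\bm{a}_{i_{m+1}})$ occurs in a quarter of all chains; it is the $(1,2)$ entry of $\bm{T}_m$. For that term, Lemma~\ref{lemma:lemma_1} applied at both ends gives only $d(j,j')\ge d(i_m,i_{m+1})-2$, and this is attained by the neighbours lying on a shortest path. Hence no per-factor bound with $\lambda^{d(i_m,i_{m+1})-1}$ is available. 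The paper's Step~2 simply asserts $|[\bm{T}_m]_{rs}|\le(1+\alpha g_{\max})C\lambda^{d(i_m,i_{m+1})-1}$ for all four entries, so it does not supply this step either.

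What works is counting per chain rather than per factor. Each $i_m$, $k\le m\le\ell-1$, places $\bm{a}_{i_m}$ in exactly one slot, so the $\ell-k-1$ interior scalars lose at most $\ell-k$ units of distance in total. Equivalently, both-$\bm{a}$ and both-$\bm{e}$ factors alternate along a chain, so all but at most one both-$\bm{a}$ factor can be paired with a both-$\bm{e}$ factor; this is the correct form of your ``pairing''. Summing over the $2^{\ell-k}$ chains gives
\[
\|\bm{X}_{k,\ell}\bm{1}\|_2\le\frac{[\bm{c}_\ell]_{\max}}{(1-c_\alpha)\lambda}\big(2\max\{1,\alpha g_{\max}\}\big)^{\ell-k}C^{\ell-k-1}\lambda^{\sum_{m=k}^{\ell-2}(d(i_m,i_{m+1})-1)}.
\]
This carries one surplus factor $\lambda^{-1}$ that cannot be removed within this method; take $\ell=k+2$, $\bm{v}_k=\bm{a}_{i_k}$, $\bm{u}_{k+1}=\bm{a}_{i_{k+1}}$.

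Your second device does not recover the exponent $\ell/2$ either. Collapsing $\K(\bm{A}_m)\bm{W}_m\K(\bm{A}_{m+1})$ into $\K(\bm{A}_{m+1})-\K(\bm{A}_m)$ via \eqref{eq:resolvent_product} discards the location $i_m$. The Demko bound applied to that difference only sees $d(i_{m-1},i_{m+1})$, which can be much smaller than $d(i_{m-1},i_m)+d(i_m,i_{m+1})$. You would therefore buy fewer constants at the price of less decay.

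The chain expansion genuinely produces the power $\ell-k-1$ of $2(1+\alpha g_{\max})C$, exactly the product $\prod_{m=k}^{\ell-2}\tau_m$ in the paper's Step~4. The paper passes to the power $\ell/2$ only by ``adjusting the constant''. That does not follow from the derivation once $\ell-k-1>\ell/2$, since $2(1+\alpha g_{\max})C>1$. So prove the bound with the constant your count actually yields, rather than trying to force the stated $\bar{C}$.
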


\begin{proof}[Sketch of the proof]
The block $\bm{X}_{k,\ell}$ can be expressed as a product of operators involving the resolvents $\K(\bm{A}_m)$ and the disruption matrices ${W}_m$, \vs{$m=k, \ldots, \ell$}, cf. \Cref{prop:block_inverse}. 
Specifically, $\bm{X}_{k,\ell}$ contains terms of the form $\K(\bm{A}_m) \bm{W}_m \K(\bm{A}_{m+1})$. 
Since $\bm{W}_m$ represents a local disruption at node $i_m$, the norm of this product is governed by the decay of the entries of $\K(\bm{A}_m)$ between nodes $i_m$ and $i_{m+1}$. 
Applying \Cref{lem:lemma2} to each factor in the product yields the exponential decay term $\lambda^{d(i_m, i_{m+1}) - 1}$. 
The constant $\bar{C}$ aggregates the norms of the boundary terms and the maximum degree of the network. 
The detailed derivation of the entry-wise bounds and norm chaining is provided in Appendix \ref{app:proof_prop_4.4}.
\end{proof}

The bound in (\ref{prop:block_decay}) shows that as the
inverse blocks move away from the main diagonal block, their magnitude
decreases, therefore, a good approximation to the inverse of 
$\bm I-\bm{\mathcal{M}}$ can be obtained by dropping the blocks that are
farthest from the diagonal ones.  In other words, a full back substitution may
be unnecessary. We will exploit this result in section~\ref{sec:truncation}.

%\begin{remark}
%The bound in (\ref{prop:block_decay}), associates the magnitude of $\|X_{k,\ell}\textbf{1}\|_2$ with the term $\sum_{m=k}^{\ell-1}d(\im,\imm)-1)$.
%{\color{red}As consecutive distances increase between nodes deleted in two consecutive layers $A_\ell$ and $A_{\ell+1}$, $X_{k,\ell}\textit{1}$ will be less significant and therefore negligible. Conversely, removing closer nodes leads to a more significant 
%$X_{k,\ell}\textbf{1}$.
%The accuracy of an approximation that neglects $X_{k,\ell}\textbf{1}$ will therefore depend on the type of nodes removed as well as the structural characteristics of the network on which our model operates as medium shortest path distance or diameter.}
%\end{remark}

\begin{remark}\label{mindist}
    For simplicity, Proposition~\ref{prop:block_decay} is stated for the case in which each $\bm{W}_m, m=k,\dots,\ell$ represents a single up- or downdate. In case of several such events, $\bm{W}_m$ becomes a sum of rank-2 terms. Then, \eqref{eq:bound_entries} contains the product over this sum and the bound is dominated by the pair of consecutive up- or downdates with minimal distance.
\end{remark}

\begin{example}\label{ex:decay}
	We revisit networks with $L=4$ layers as considered in \Cref{ex:four_block_system} and \vs{described} by
	\begin{equation*}
	\bm{\mathcal{M}}_4 =
	\begin{bmatrix}
	\bm{A}_1&\bm{W}_1&\bm{0}&\bm{0}\\
	\bm{0}&\bm{A}_2&\bm{W}_2&\bm{0} \\
	\bm{0}&\bm{0}&\bm{A}_3 & \bm{W}_3\\
	\bm{0}&\bm{0}&\bm{0}&\bm{A}_4
	\end{bmatrix},
	\end{equation*}
	where $\bm{W}_1 = - \bm{e}_{i_1}\bm{a}_{i_1}^\top - \bm{a}_{i_1}\bm{e}_{i_1}^\top$, $\bm{W}_2 = - \bm{e}_{i_2}\bm{a}_{i_2}^\top - \bm{a}_{i_2}\bm{e}_{i_2}^\top$, and $\bm{W}_3 = - \bm{e}_{i_3}\bm{a}_{i_3}^\top - \bm{a}_{i_3}\bm{e}_{i_3}^\top$ represent the removal of edges $i_1, i_2,$ and $i_3$, respectively.
	By \Cref{prop:block_decay} and \eqref{eq:resolvent_product}, we have
	\begin{align}
	\|\bm{X}_{1,3}\bm{1}\|_2 & = \|\K(\bm{A}_1)\bm{W}_1(\K(\bm{A}_3)\bm{1} - \K(\bm{A}_2)\bm{1})\|_2 && \leq \bar{C}\lambda^{d(i_1,i_2)-1},\label{eq:bound_13}\\
	\|\bm{X}_{2,4}\bm{1}\|_2 & = \|\K(\bm{A}_2)\bm{W}_2(\K(\bm{A}_4)\bm{1}-\K(\bm{A}_3)\bm{1})\|_2 && \leq \bar{C}\lambda^{d(i_2,i_3)-1},\label{eq:bound_24}\\
	\|\bm{X}_{1,4}\bm{1}\|_2 & = \|(\K(\bm{A}_2)-\K(\bm{A}_1))\bm{W}_2(\K(\bm{A}_4)\bm{1}-\K(\bm{A}_3)\bm{1})\|_2 && \leq \bar{C}\lambda^{d(i_1,i_2)+d(i_2,i_3)-2}.\label{eq:bound_14}
	\end{align}
	
	We consider the street network of Mittweida with $N=563$ edges, the parameter $c_\alpha=0.8$, and a randomly chosen index $i_1$.
	Experiments not included in the manuscript show that different choices of $i_1$ do not affect the qualitative behavior.
	Selecting the maximal distance $d_{\mathrm{max}}=20$ between consecutively removed nodes, we compute $\|\bm{X}_{1,3}\bm{1}\|_2, \|\bm{X}_{2,4}\bm{1}\|_2,$ and $\|\bm{X}_{1,4}\bm{1}\|_2$ with linear system tolerance $10^{-12}$.
	\Cref{fig:decay_13,fig:decay_24} show $\|\bm{X}_{1,3}\bm{1}\|_2$ and $\|\bm{X}_{2,4}\bm{1}\|_2$ as functions of $d(i_1,i_2)$ and $d(i_2,i_3)$, respectively, alongside their bounds \eqref{eq:bound_13} and \eqref{eq:bound_24}.
	\Cref{fig:decay_14} illustrates the additivity of distances between consecutive up- or downdates in the exponent of $\lambda$ shown in \Cref{prop:block_decay}.
	The bound \eqref{eq:bound_14} of $\|\bm{X}_{1,4}\bm{1}\|_2$ is shown in \Cref{fig:decay_14_diag} as a function of $d(i_1,i_2)+d(i_2,i_3)$ with  $d(i_1,i_2)=d(i_2,i_3)$, which behaves very similarly to those of \Cref{fig:decay_13,fig:decay_24}.
\end{example}

\begin{figure}[t]
	\centering
	\subfloat[$\|\bm{X}_{1,3}\bm{1}\|_2$]{\includegraphics[width=0.48\linewidth]{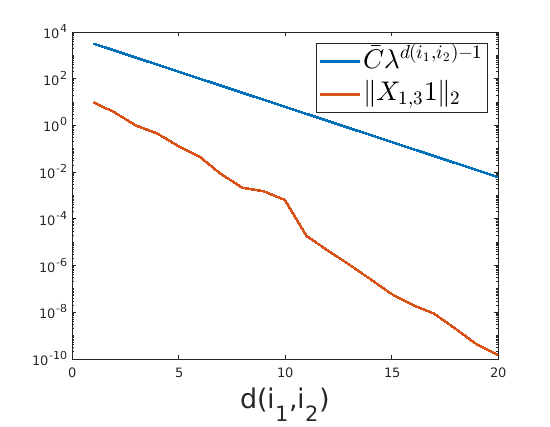}\label{fig:decay_13}}
	\hfill
	\subfloat[$\|\bm{X}_{2,4}\bm{1}\|_2$]{\includegraphics[width=0.48\linewidth]{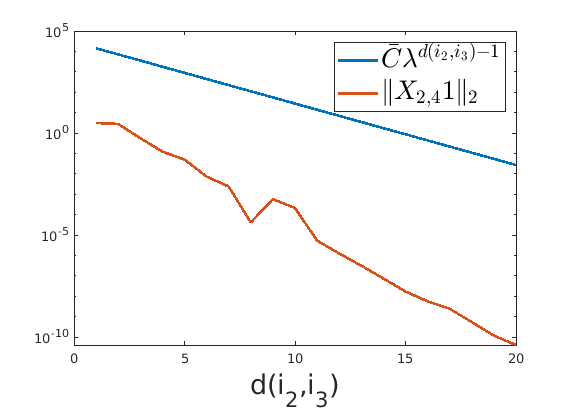}\label{fig:decay_24}}
    
	\subfloat[$\|\bm{X}_{1,4}\bm{1}\|_2$]{\includegraphics[width=0.48\linewidth]{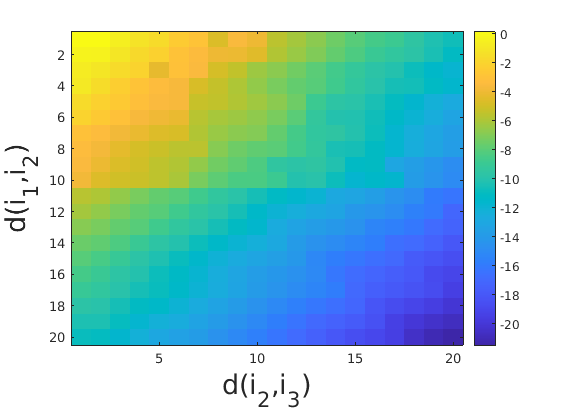}\label{fig:decay_14}}
    \hfill
	\subfloat[$\|\bm{X}_{1,4}\bm{1}\|_2$]{\includegraphics[width=0.48\linewidth]{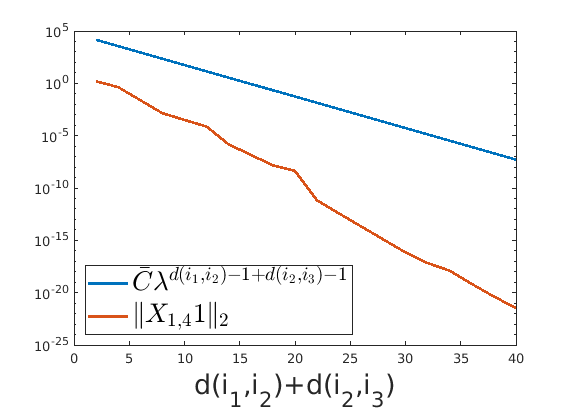}\label{fig:decay_14_diag}}
	\caption{Comparison of $\|\bm{X}_{1,3}\bm{1}\|_2, \|\bm{X}_{2,4}\bm{1}\|_2,$ and $\|\bm{X}_{1,4}\bm{1}\|_2$ with their bounds for the street network of Mittweida.
    Panels (a) and (b) show $\|\bm{X}_{1,3}\bm{1}\|_2$ and $\|\bm{X}_{2,4}\bm{1}\|_2$ as functions of $d(i_1,i_2)$ and $d(i_2,i_3)$, respectively.
	Panel (c) shows $\|\bm{X}_{1,4}\bm{1}\|_2$ as a function of both $d(i_1,i_2)$ and $d(i_2,i_3)$.
	Panel (d) shows the ``diagonal'' of panel (c), i.e., the quantity $\|\bm{X}_{1,4}\bm{1}\|_2$ as a function of $d(i_1,i_2)+d(i_2,i_3)$ with  $d(i_1,i_2)=d(i_2,i_3)$ alongside its bound.}
\end{figure}

\section{Computational strategies}\label{sec:computations}
The main computational cost associated with the indices \KMN\ and \KML\ defined in \eqref{eq:def_kmn} and \eqref{eq:def_kml}, respectively, arises from solving the linear system
\begin{equation}\label{eqn:sys}
(\bm{I} - \bm{\mathcal{M}}_L)\underline{\bm c} = \bm{1},
\end{equation}
where the system matrix is upper block-bidiagonal. As time proceeds, $L$ grows and so does the number of blocks in the matrix.
This raises three practical questions that we address in turn: whether the system can be solved approximately at lower cost without 
sacrificing accuracy (\Cref{sec:truncation}), whether the solution can be updated as $L$ grows without recomputing the whole system from scratch (\Cref{sec:truncation_strategy}), and how well the resulting strategies scale on parallel hardware for large-scale network sizes that one encounters in real infrastructure networks (\Cref{sec:scaling-experiments}). We derive each strategy and immediately assess it numerically on the test networks of \Cref{sec:data}, before closing with the parallel-scaling study.

\subsection{Truncation of the block matrix}\label{sec:truncation}
Solving \eqref{eqn:sys} directly admits two standard strategies: block back-substitution, which is exact but inherently sequential,
 since block $\ell$ depends on block $\ell+1$; or applying an iterative method such as BiCGStab to the full system, which does not exploit the block structure at all.
 The scaling \eqref{eq:def_alpha} clusters the spectrum of $\bm{\mathcal{M}}_L$ conveniently around $1$, leading to rapid convergence of the latter. 
 However, neither strategy takes advantage of the entry decay proven in \Cref{sec:structural_analysis}, %which may lead 
 \vs{possibly leading}
 to unnecessary computations.

 In this section, we propose an approximation of $\K(\bm{\mathcal{M}}_L)$ that \vs{mainly} requires solving the $L$ decoupled diagonal-block systems $\K(\bm{A}_\ell)\bm{1}$, $\ell=1,\dots,L$ of the single-layer network of size $N$. \vs{This computation is}, by construction, embarrassingly parallel.
 It follows from the structure of $\K(\bm{\mathcal{M}}_L)$ derived in \Cref{sec:structural_analysis} that this suffices to represent the block-diagonal as well as the block-superdiagonal of $\K(\bm{\mathcal{M}}_L)$ exactly.
 The rationale behind this approach is that large expected distances between up- or downdated edges in consecutive time steps lead to a rapid entry decay that makes the contribution of $\|\bm{X}_{k,\ell}\bm{1}\|_2$ with $\ell\geq k+1$ negligible.
 Clearly, smaller values of $\|\bm{X}_{k,\ell}\bm{1}\|_2$ make the approximation more accurate.
 \vs{Nonetheless, care should be taken in avoiding a greedy block truncation. Indeed, if only the first upper diagonal block is retained when farther diagonals are away from nonzero, the truncated matrix  $\K(\bm{\mathcal{M}}_L)$ is no more informative than that of a simple block diagonal supra-adjacency matrix. In the following we provide one such example.}

 \begin{example}
 	In the setting of \Cref{ex:four_block_system}, \vs{for $L=4$ assume that truncation to the first upper block is performed, leading 
    to the following approximation of the failure-aware temporal multiplex Katz centrality $\K(\bm{\mathcal{M}}_4)\bm{1}$,
    %so that the truncated $\K(\bm{\mathcal{M}}_4)$ looks as follows,}
    %we define the truncation of $K(\bm{\mathcal{M}}_4)$ above the superdiagonal as
    {\small
 	\begin{equation*}
 	\overline{\K(\bm{\mathcal{M}}_4)}\bm{1} \!\!=\!\!
 	\begin{bmatrix}
 	\K(\bm{A}_1) & \K(\bm{A}_2) - \K(\bm{A}_1) & \bm{0} & \bm{0} \\
 	\bm{0} & \K(\bm{A}_2) & \K(\bm{A}_3) - \K(\bm{A}_2) & \bm{0} \\
 	\bm{0} & \bm{0} & \K(\bm{A}_3) & \K(\bm{A}_4) - \K(\bm{A}_3) \\
 	\bm{0} & \bm{0} & \bm{0} & \K(\bm{A}_4) .
 	\end{bmatrix}\!\bm{1}\! =\!
 \begin{bmatrix}
 \K(\bm{A}_2)\mathbf{1} \\
 \K(\bm{A}_3)\mathbf{1} \\
 \K(\bm{A}_4)\mathbf{1} \\
 \K(\bm{A}_4)\mathbf{1}
 \end{bmatrix}\!.   
 \end{equation*}
 }
 }
 %\vs{This approximation leads to the following approximation of the failure-aware temporal multiplex Katz centrality $\K(\bm{\mathcal{M}}_4)\bm{1}$,}
 %being approximated by
 %$$
 %\overline{\K(\bm{\mathcal{M}}_4)}\bm{1} = \begin{bmatrix}
 %\K(\bm{A}_1)\mathbf{1} + (\K(\bm{A}_2) - \K(\bm{A}_1))\mathbf{1} \\
 %\K(\bm{A}_2)\mathbf{1} + (\K(\bm{A}_3) - \K(\bm{A}_2))\mathbf{1} \\
 %\K(\bm{A}_3)\mathbf{1} + (\K(\bm{A}_4) - \K(\bm{A}_3))\mathbf{1} \\
 %\K(\bm{A}_4)\mathbf{1}
 %\end{bmatrix}
 %=
 %\begin{bmatrix}
 %\K(\bm{A}_2)\mathbf{1} \\
 %\K(\bm{A}_3)\mathbf{1} \\
 %\K(\bm{A}_4)\mathbf{1} \\
 %\K(\bm{A}_4)\mathbf{1}
 %\end{bmatrix}.
 %$$
% The approximation error is given by the quantities $\|\bm{X}_{13}\bm{1}\|_2, \|\bm{X}_{14}\bm{1}\|_2,$ and $\|\bm{X}_{24}\bm{1}\|_2$, for which we prove bounds in \Cref{prop:block_decay}.

 The example generalizes to an arbitrary number of layers in which case we have 
 {\small 
 $$
 \overline{\K(\bm{\mathcal{M}}_L)}\bm{1} = \begin{bmatrix}
 \K(\bm{A}_2)\mathbf{1} \\
 \K(\bm{A}_3)\mathbf{1} \\
 \vdots \\
 \K(\bm{A}_L)\mathbf{1} \\
 \K(\bm{A}_L)\mathbf{1}
 \end{bmatrix}.
 $$
 }

 Here, \vs{truncation makes the effect of the superdiagonal of $\bm{\mathcal{M}}_L$ containing $\bm{W}_1,\dots,$ $\bm{W}_{L-1}$} so small that no additional information is present in $\overline{\K(\bm{\mathcal{M}}_L)}\bm{1}$ in comparison to single-layer Katz centrality applied to the individual time snapshots.
 In fact, in the block formulation, the centrality $\K(\bm{A}_L)\bm{1}$ is duplicated while that of $\K(\bm{A}_1)\bm{1}$ is omitted.
 \vs{On the other hand, if the solution resulting from this drastic truncation is still highly accurate, it means that our {\it full} failure-aware temporal multiplex model does not provide any extra information with respect to the block diagonal model. This should be an indication that the multiplex network contains very few node modifications with time.}
 %,   the case where $\overline{\K(\bm{\mathcal{M}}_L)}\bm{1}$ approximates $\K(\bm{\mathcal{M}}_L)\bm{1}$ to high accuracy represents a limitation of cf.~\Cref{sec:FATE}.
  \end{example}
  
  \Cref{sec:truncation_illustrative} presents one such example while our new model adds new insights for all other numerical examples presented in this manuscript.

\subsection{Updating and truncating the centrality vector}\label{sec:truncation_strategy}
We now consider the situation in which failure-aware temporal Katz centralities \eqref{centrality} have been computed for $L$ time layers and that a new up- or downdate becomes available for layer $L+1$.
We wish to update the temporal centralities as this new layer arrives, without solving the enlarged $N(L+1)\times N(L+1)$ system
\begin{equation}
(\bm{I}-\bm{\mathcal{M}}_{L+1})\underline{\bm{c}}_{\textrm{new}}=\bm{1}_{N(L+1)}
\end{equation}
from scratch.
Note that the matrix $\K(\bm{\mathcal{M}}_{L+1})$ generally contains an additional fully dense 
%additional 
block column in comparison to $\K(\bm{\mathcal{M}}_{L})$.
Hence, the upper $L$ blocks of $\underline{\bm{c}}_{\textrm{new}}$ generally differ from $\underline{\bm{c}}$ defined in \eqref{centrality}.
We write $\bm{C}_{\textrm{new}}=[\bm{C}_{\textrm{upd}},\bm{c}_{L+1}]\in\mathbb{R}^{N\times(L+1)}$, where $\bm{C}_{\textrm{upd}}$ collects the updated centralities at layers $1,\dots,L$ after incorporating the new up- or downdate at time $L+1$, while $\bm{c}_{L+1}$ denotes the centrality vector of the network described by $\bm{A}_{L+1}$. To relate $\bm{C}_{\textrm{new}}$ to the already computed $\bm{C}$, we partition the enlarged system as
\begin{equation}\label{matrix1}
\begin{bmatrix}
\bm{I} - \bm{\mathcal{M}}_{L} & -\bm{E}_{L}\bm{W}_{L} \\
{\bm 0} & \bm{I} - \bm{A}_{L+1}
\end{bmatrix}
\begin{bmatrix}
\underline{\bm{c}}_{\textrm{upd}}\\
\bm{c}_{L+1}
\end{bmatrix}
=
\mathbf{1}_{N(L+1)},
\end{equation}
where $\bm{E}_{L}=[\bm{0},\dots,\bm{0},\bm{I}_N]^\top\in\mathbb{R}^{NL\times N}$ and $\underline{\bm{c}}_{\textrm{upd}}$ stacks the columns of $\bm{C}_{\textrm{upd}}$ one after the other. From \eqref{matrix1} it follows that
\begin{equation}
\begin{cases}
(\bm{I} - \bm{\mathcal{M}}_{L}) \underline{\bm{c}}_{\textrm{upd}} -\bm{E}_{L}\bm{W}_{L} \bm{c}_{L+1}= \bm{1}_{NL} \\
(\bm{I} - \bm{A}_{L+1}) \bm{c}_{L+1} = \bm{1}_{N} ,
\end{cases}
\end{equation}
so that $\bm{c}_{L+1}=(\bm{I}-\bm{A}_{L+1})^{-1}\bm{1}_{N}$.
Defining $\underline{\bm{c}}_L=(\bm{I} - \bm{\mathcal{M}}_{L})^{-1}\bm{1}_{NL}$, we have
\begin{equation}\label{adj}
\underline{\bm{c}}_{\textrm{upd}} = \underline{\bm{c}}_L + (\bm{I}-\bm{\mathcal{M}}_{L})^{-1}(\bm{E}_{L}\bm{W}_L)\bm{c}_{L+1}.
\end{equation}
The update thus requires solving two linear systems: a small one, involving $(\bm{I}-\bm{A}_{L+1})$, for $\bm{c}_{L+1}$, and a large one, involving $(\bm{I}-\bm{\mathcal{M}}_{L})$, for the correction term in \eqref{adj}. The former is negligible; the latter costs exactly as much as the original solve of \eqref{centrality}, and this full cost recurs at every subsequent update. As with the truncated solve of \Cref{sec:truncation}, we show in \Cref{sec:truncation-numerics} that a truncated evaluation of \eqref{adj} yields a computational advantage that becomes more significant as $L$ increases.

\begin{remark}
We can rewrite the update step as
\begin{equation}\label{rem:c_update}
\underline{\bm{c}}_{L+1} =
\begin{bmatrix}
\underline{\bm{c}}_L \\ \bm{0}
\end{bmatrix}
+ \begin{bmatrix}
(\bm{I}-\bm{\mathcal{M}}_{L})^{-1}(\bm{E}_{L}\bm{W}_L)\bm{c}_{L+1}\\
\bm{c}_{L+1}
\end{bmatrix}
= \begin{bmatrix}
\underline{\bm{c}}_L \\ \bm{0}
\end{bmatrix}
+ \underline{\bm u},
\end{equation}
with $\underline{\bm u}=(\bm{I}-\bm{\mathcal{M}}_{L+1})^{-1}\bm{E}_{L+1}\bm{1}_{N}$, which can be written blockwise as
\begin{equation}\label{eqn:u}
\underline{\bm u}
=\begin{bmatrix}
\bm{X}_{1,L+1}\bm{1} \\
\vdots \\\bm{X}_{L,L+1}\bm{1}
\\\bm{X}_{L+1,L+1}\bm{1}
\end{bmatrix}=
\begin{bmatrix}
\K(\bm{A}_1) \bm{W}_1 \cdots\bm{W}_L \K({\bm A}_{L+1})\bm{1} \\
\vdots \\
\\\K(\bm{A}_L) \bm{W}_L \K(\bm{A}_{L+1})\bm{1} \\
{\K({\bm A}_{L+1})}\bm{1}
\end{bmatrix} .
\end{equation}
\end{remark}

%This block-column expression for $\underline{\bm u}$ has exactly the structure analyzed in \Cref{sec:truncation}, here restricted to the last block column of $K(\bm{\mathcal{M}}_{L+1})$.
In \Cref{sec:truncation}, we discussed truncating $\K(\bm{\mathcal{M}}_L)$ above the superdiagonal.
We now generalize this approach for $\K(\bm{\mathcal{M}}_{L+1})$ by retaining the $h+1$ blocks above the block-diagonal and setting the remaining $L-h$ block components to zero.
This yields the truncated update vector
\begin{equation}\label{eqn:overu}
\overline{\underline{\bm u}}=
\overline{\K(\bm{\mathcal{M}}_{L+1})} \bm{E}_{L+1} \bm{1}_{N}
=
\begin{bmatrix}
0 \\
\vdots\\
0\\
\K(\bm{A}_{L-h}) \bm{W}_{L-h}\cdots\bm{W}_{L} \K({\bm A}_{L+1})\bm{1}\\
\vdots \\
\\\K(\bm{A}_L) \bm{W}_L \K(\bm{A}_{L+1})\bm{1} \\
\K({\bm A}_{L+1})\bm{1}
\end{bmatrix}.
\end{equation}
\rev{The truncated centrality block vector then reads
\begin{equation}\label{eq:trunc_centrality}
\overline{\underline{\bm{c}}}_{L+1} = \begin{bmatrix}
\underline{\bm{c}}_L \\ \bm{0}
\end{bmatrix}
+ \overline{\underline{\bm u}}, \qquad\text{or}\qquad \overline{\underline{\bm{c}}}_{L+1} = \begin{bmatrix}
\overline{\underline{\bm{c}}}_L \\ \bm{0}
\end{bmatrix}
+ \overline{\underline{\bm u}},
\end{equation}
depending on whether $\underline{\bm{c}}_L$ has been computed to the employed tolerance or truncation has previously been applied for obtaining $\underline{\bm{c}}_L$.
Note that the truncated quantities $\overline{\K(\bm{\mathcal{M}}_{L+1})}, \overline{\underline{\bm{c}}}_{L+1}$ and $\overline{\underline{\bm u}}$ depend on the truncation parameter $h$. We suppress this dependence in the notation for readability and instead report the specific choices of $h$ used in \Cref{sec:numerics}.}
We next exploit the structure of $\bm{W}_\ell$ to evaluate the retained blocks by backward recursion, rather than by forming any $\K(\bm{A}_\ell)$ explicitly. After computing $\bm{u}_{L+1}=\bm{c}_{L+1}=\K(\bm{A}_{L+1})\bm{1}$, the next to last block follows from \Cref{lem:inv_prod} since $\bm{A}_{L+1}=\bm{A}_L+\bm{W}_L$ as
$$
\bm{u}_{L}=\K(\bm{A}_L) \bm{W}_L \K(\bm{A}_{L+1})\bm{1}=\K(\bm{A}_{L+1})\bm{1}-\K(\bm{A}_L)\bm{1},
$$
and analogously for the next block,
$$
\bm{u}_{L-1}=\K(\bm{A}_{L-1})\bm{W}_{L-1}\K(\bm{A}_{L})\bm{W}_{L}\K(\bm{A}_{L+1})\bm{1}
=
\K(\bm{A}_{L-1})\bm{W}_{L-1} \bm{u}_{L}.
$$
The remaining terms follow by the same recursion, so that evaluating each of the $h$ retained blocks incurs the cost of a single small linear system solve.
With this strategy, the cost of solving the large $NL\times NL$ system \eqref{adj} is thus reduced to the solution of $h$ small systems of size $N\times N$.

The update procedure without and with truncation is summarized in \Cref{alg:WO_trunc,alg:W_trunc}, respectively.

\begin{algorithm}[t]
	\begin{tabular}{lll}
		Input:
		& $\underline{\bm{c}}_L=(\bm{I} - \bm{\mathcal{M}}_{L})^{-1}\bm{1}_{NL},$ & Katz centralities for times 1 to $L$.\\
		& $\bm{\mathcal{M}}_{L},$ & Full multiplex matrix at time $L$.\\
		& $\bm{A}_{L+1},$ & Next time step's adjacency matrix.\\
		& $\bm{W}_L,$ & Next modification $\bm{W}_L=\bm{A}_{L+1}-\bm{A}_L$.\\
		Output: & $\underline{\bm{c}}_{L+1}$, & Katz centralities for times 1 to $L+1$.
	\end{tabular}
	\vspace{.3em}
	\begin{algorithmic}[1]
		\State Solve $(\bm{I} - \bm{A}_{L+1}) \bm{c}_{L+1} = \bm{1}_{N}$\Comment{size $N\times N$}
		\State Solve $(\bm{I}-\bm{\mathcal{M}}_{L}) \bm{x} = (\bm{E}_{L}\bm{W}_L)\bm{c}_{L+1}$\Comment{size $NL\times NL$}
		\State Set $\underline{\bm{u}} = \begin{bmatrix}\bm{x}\\ \bm{c}_{L+1}\end{bmatrix}$
		\State Set $\underline{\bm{c}}_{L+1} = \begin{bmatrix}\underline{\bm{c}}_L\\\bm{0}\end{bmatrix}+\underline{\bm{u}}$
	\end{algorithmic}
	\caption{Update \emph{without} truncation.}\label{alg:WO_trunc}
\end{algorithm}

\begin{algorithm}[t]
	\begin{tabular}{lll}
		Input:
		& \rev{$\overline{\underline{\bm{c}}}_L,$} & \rev{(Truncated) Katz centralities for times $1$ to $L$.}\\
		& $\bm{A}_{L-h},\dots,\bm{A}_{L+1},$ & Time step adjacency matrices.\\
		& $\bm{W}_{L-h},\dots,\bm{W}_{L-1},$ & Previous modifications.\\
		& $h\in\mathbb{N},$ & Truncation length.\\
		Output: & $\overline{\underline{\bm{c}}}_{L+1}$, & \emph{Truncated} Katz centralities for times 1 to $L+1$.
	\end{tabular}
	\vspace{.3em}
	\begin{algorithmic}[1]
		\State Solve $(\bm{I} - \bm{A}_{L+1}) \bm{c}_{L+1} = \bm{1}_{N}$\Comment{size $N\times N$}
		\State Set $\bm{u}_{L+1}=\bm{c}_{L+1}$
		\State \rev{Extract $\overline{\bm{c}}_L = \bm{E}_L^\top\overline{\underline{\bm{c}}}_L$}
		\State \rev{Set $\bm{u}_{L}=\bm{c}_{L+1} - \overline{\bm{c}}_L$}
		\For{$i=1,\dots,h$}
		\State Solve $(\bm{I} - \bm{A}_{L-i}) \bm{u}_{L-i} = \bm{W}_{L-i}\bm{u}_{L-i+1}$\Comment{size $N\times N$, $h$ times}
		\EndFor
		\State Set $\overline{\underline{\bm u}} =
		\begin{bmatrix}\bm{0}^\top & \cdots & \bm{0}^\top & {\bm u}_{L-h}^\top & \cdots & {\bm u}_{L}^\top & {\bm u}_{L+1}^\top\end{bmatrix}^\top$
		\State \rev{Set $\overline{\underline{\bm{c}}}_{L+1} = \begin{bmatrix}\overline{\underline{\bm{c}}}_L\\\bm{0}\end{bmatrix}+\overline{\underline{\bm{u}}}$}
	\end{algorithmic}
	\caption{Update \emph{with} truncation.}\label{alg:W_trunc}
\end{algorithm}

%\todo[inline]{VS: In line 9 of Alg.5.2 we define
%${\bar c}_{L+1}$ in terms of $c_L$, so $c_L$ is the exact one? The rest of the discussion seems to only address a single approx update, whereas there is an accumulation of errors, if $c_L$ is replaced by ${\bar c}_{L}$}

We may use different error measures for the difference of \Cref{alg:W_trunc,alg:WO_trunc}.
\rev{In the case that only a single new layer $L+1$ is added while centralities for the previous $L$ layers have been computed exactly, the input $\overline{\underline{\bm{c}}}_L = \underline{\bm{c}}_L=(\bm{I} - \bm{\mathcal{M}}_{L})^{-1}\bm{1}_{NL}$ of \Cref{alg:W_trunc} coincides with that of \Cref{alg:WO_trunc}.
We then define the overall relative truncation error}
$$
\frac{\|\overline{\underline{\bm u}} - {\underline{\bm u}}\|_2}{\|{\underline{\bm u}}\|_2},
$$
\rev{and the layer-wise truncation errors}
$$
\|\bm{E}_\ell^\top\underline{\bm{u}} -  \bm{E}_\ell^\top\overline{\underline{\bm{u}}}\|_2 = \|\bm{E}_\ell^\top\underline{\bm{c}}_{L+1} -  \bm{E}_\ell^\top\overline{\underline{\bm{c}}}_{L+1}\|_2, \quad \ell=1,\dots,L+1,
$$
\rev{which decrease for decreasing $\ell$, i.e., the choice of truncation parameter $h$ depends on the rate of error decay.}

\rev{In general, \Cref{alg:W_trunc} may be applied recursively for several layer additions, as required, e.g., in the real-time monitoring of infrastructure networks.
In this case, the input centrality vector $\overline{\underline{\bm{c}}}_L$ already bears a truncation error from previous layer additions.
To measure the accumulation of truncation errors over several layer additions, we define the cumulated error of layer $\ell=1,\dots,L+1$ as}
\begin{equation}\label{eq:cumulated_err}
\frac{\|\overline{\bm{C}}_\ell - \bm C_\ell\|_F}{\|\bm C_\ell\|_F}.
\end{equation}
\rev{where $\bm C_\ell$ and $\overline{\bm{C}}_\ell$ denote the matricizations of the first $\ell$ blocks of the outputs of \Cref{alg:WO_trunc} and \Cref{alg:W_trunc}, respectively.}

\rev{\begin{remark}
	The truncation errors introduced by the repeated application of \Cref{alg:W_trunc} are independent of each other.
	When moving from $L$ to $L+1$ layers, line $1$ of \Cref{alg:W_trunc} computes $\bm{u}_{L+1}=\bm{c}_{L+1}$, i.e., block $L+1$ of $\overline{\underline{\bm{c}}}_{L+1}$ to the employed tolerance.
	Moving from $L+1$ to $L+2$ layers, line $3$ of \Cref{alg:W_trunc} extracts the latter block, which implies that accuracy of the linear system solves in line $6$ is unaffected by prior truncation.
	This means that truncation errors of repeated layer additions are cumulated but not amplified.
	In fact, numerical experiments reported in \Cref{sec:recursive_updating} indicate that, in practice, the cumulation error \eqref{eq:cumulated_err} stagnates after a certain number of layer additions.
\end{remark}}

\subsection{Update of marginal centralities}\label{sec:marginal_update}

The derivation in \Cref{sec:truncation_strategy} also provides an explicit way of updating the marginal indices $\KMN$ and $\KML$ and their truncation as an additional time layer is added.
After $L$ time steps, the failure-aware temporal Katz centrality index for node $i$ is given by
$$
\KMN(i)_L=\frac 1 L {\bm e}_i^T {\bm C}_L {\bm 1}_L ,
$$
where the subscript $L$ keeps track of the number of time steps and $\bm{C}_L\!=\![\bm{c}_1^{(L)},\ldots,\bm{c}_L^{(L)}]$. To determine $\KMN(i)_{L+1}$, the new centrality matrix $\bm{C}_{L+1}=[\bm{c}_1^{(L+1)}, \ldots, \bm{c}_{L+1}^{(L+1)}]$ is obtained from \eqref{rem:c_update} as
\begin{equation}\label{eqn:nextC}
\bm{C}_{L+1}=[\bm{c}_1^{(L)}+\bm{u}_1, \ldots,
\bm{c}_L^{(L)}+\bm{u}_L, \bm{u}_{L+1}],
\end{equation}
where $\bm{u}_{L+1}=\bm{c}_{L+1}^{(L+1)}$. Hence,
$$
\KMN(i)_{L+1}=\frac 1 {L+1}  \left ( L\cdot \KMN(i)_L  + {\bm e}_i^T  \sum_{i=1}^{L+1}\bm{X}_{i,L+1}\bm{1} \right ),
$$
and truncation affects only the summand,
$$
\overline{\KMN(i)}_{L+1}=\frac 1 {L+1}  \left ( L\cdot \KMN(i)_L  + {\bm e}_i^T \sum_{i=L-h}^{L+1}\bm{X}_{i,L+1}\bm{1} \right ).
$$

The marginal layer Katz centrality of layer $\ell=1,\dots,L+1$ follows the same pattern. After $L+1$ time steps and using \eqref{eqn:nextC}, this index for $\ell \le L$ can be written as
\begin{equation*}
\KML(\ell)_{L+1} = \bm{1}_N^\top \bm{C}_{L+1} \bm{e}_\ell =
\KML(\ell)_{L}+ \bm{1}_N^\top {\bm u}_\ell =
\KML(\ell)_{L}+ \bm{1}_N^\top \bm{X}_{\ell,L+1} \bm{1}_N,
\end{equation*}
and can be interpreted as the overall network communicability at time $\ell$.
%; for $\ell=L+1$ we have $\KML(\ell)_{L+1} =\bm{1}_N^\top \bm{X}_{\ell,L+1} \bm{1}_N$ exactly.
Truncation thus leaves $\KML(\ell)_{L+1}$ unaffected for $\ell=L-h,\dots,L+1$, while for $\ell=1,\dots,L-h-1$ the update $\bm{1}_N^\top \bm{X}_{\ell,L+1} \bm{1}_N$ is discarded as it is expected to be negligible \vs{for the employed tolerance}.

%we have the exact quantity at hand since $\bm{1}_N^\top \bm{X}_{\ell,L+1} \bm{1}_N = \bm{1}_N^\top \bm{c}_{L+1}^{(L+1)}$, while for $\ell=1,\dots,L$ it carries a truncation error $\bm{1}_N^\top \bm{X}_{\ell,L+1} \bm{1}_N$ that we have under control via the bound on $\|\bm{X}_{\ell,L+1} \bm{1}_N\|_2$ from \eqref{eq:bound_entries}.
%A finer, entry-wise analogue of \Cref{prop:block_decay}, expressed directly in terms of walk counts as in \cite{arrigo2025updating}, could sharpen this bound to individual entries of $\bm{X}_{\ell,L+1}\bm{1}_N$ rather than its norm; we leave this refinement for future work.

\section{Numerical experiments}\label{sec:numerics}
We now study the truncation and update strategies of \Cref{sec:truncation,sec:truncation_strategy} numerically.
\Cref{sec:data} introduces the test networks and reports the truncation error for three regimes of breakage locality.
\Cref{sec:truncation-numerics} compares the truncated update with the classical Katz index and with exact recomputation, both in accuracy and in runtime.
\Cref{sec:scaling-experiments} closes with the parallel performance of the resulting solvers.
All experiments are carried out in MATLAB R2024b on a dual-socket AMD EPYC~9534 node.
Diagonal-block systems are solved by CG and systems involving $\bm{\mathcal{M}}_L$ by BiCGStab.
% Codes for reproducing all numerical experiments are publicly available under \url{https://github.com/mstoll1602/waternetworks_code}.

\subsection{Test networks}\label{sec:data}

Our test networks are water and power networks of different sizes, symmetrized and binarized if necessary.

For the water network \rev{\texttt{ER}} of Emilia--Romagna with $n=199\,312$ nodes and $N=261\,799$ edges, real breakage and repair records covering four years of daily up- and downdates are available.
These records are reported on the node level and do not distinguish breakages from repairs.
We treat the initial network as intact and interpret every downdate of a previously intact node as a breakage and every update of a previously broken node as a repair.
Since our model operates on the edge level, all edges incident to an up- or downdated node are removed or restored accordingly.

The \texttt{power}\footnote{\url{https://sparse.tamu.edu/PowerSystem/power197k}} network with $n=197\,156$ nodes and $N=320\,812$ edges is taken from the SuiteSparse collection.

The \rev{\texttt{Borgo\_cascade}} water network, a small town in Emilia--Romagna, with $n=9\,821$ nodes and $N=10\,410$ edges is subject to a cascade failure in which, starting from the most central edge, all neighbors of previously downdated edges break in each time step.
This scenario is the worst case with respect to \Cref{prop:block_decay} since consecutive breakages are always at distance $1$.

\subsection{Truncation strategy}

\subsubsection{Illustrative examples}\label{sec:truncation_illustrative}

We first study the effect of the truncation strategy proposed in \Cref{sec:truncation_strategy}, i.e., the difference between \Cref{alg:WO_trunc} and \Cref{alg:W_trunc} for different parameters of $h$.
We include three networks covering different decay behavior with respect to \Cref{prop:block_decay} from very fast over intermediate to very slow decay.
Throughout this section, we use $L=10$ and $c_\alpha=0.8$.

\Cref{tab:truncation_strategy} reports the relative truncation error $\|\overline{\underline{\bm{u}}}-\underline{\bm{u}}\|_2/\|\underline{\bm{u}}\|_2$ together with the layer-wise errors $\|\bm{E}_\ell^\top(\underline{\bm{u}}-\overline{\underline{\bm{u}}})\|_2$, where $\bm{E}_\ell^\top$ extracts the $\ell$-th block of a vector for the three example networks.
Both algorithms were run with the two linear system tolerances $10^{-7}$ and $10^{-12}$ to distinguish truncation errors from linear system errors.
Solutions up to linear system tolerance are denoted by \texttt{tol}.

The three networks represent increasingly demanding regimes in terms of distances between consecutive up- or downdates.
The first $L=10$ layers of real breakage data of the water network \rev{\texttt{ER}} contain between $3$ and $21$ up- or downdates per time step, such that no coupling matrix $\bm{W}_\ell$ vanishes.
In this setting, truncation is not needed at all since solving the block-diagonal systems, i.e., $h=0$ as described in \Cref{sec:truncation} is already accurate up to the linear system tolerance.
For the \texttt{power} network for which we choose $1000$ random breakages per time step, the choices $h=1$ or $h=2$ appear appropriate.
Finally, the cascade failure in \rev{\texttt{Borgo\_cascade}} represents an extreme case in which the truncation error is substantial and decreases only gradually with $h$, as expected when consecutive breakages occur at the minimal possible distance.
In this case, choosing a relatively large truncation parameter may be necessary to obtain the desired accuracy.

\begin{table}
\centering
%\fittable{\begin{tabular}{|c||c|c|c||c|c|c|c}
%\hline\hline \texttt{tol} & \multicolumn{3}{|c|}{$10^{-7}$} &\multicolumn{3}{|c|}{$10^{-12}$}\\\hline\hline
%Error & $h=1$ & $h=2$ & $h=3$ & $h=1$ & $h=2$ & $h=3$\\\hline\hline
%$\frac{\|\overline{\underline{\bm u}} - {\underline{\bm u}}\|_2}{\|{\underline{\bm u}}\|_2}$ & $1.69\cdot 10^{-07}$ & $1.69\cdot 10^{-07}$ & $1.69\cdot 10^{-07}$  & $1.39\cdot 10^{-12}$&$1.39\cdot 10^{-12}$&$1.39\cdot 10^{-12}$\\\hline\hline
%$\|\bm{E}_{L+1}^\top(\underline{\bm{u}} - \overline{\underline{\bm{u}}})\|_2$ & 0 & 0 & 0 & 0 & 0 & 0\\\hline
%$\|\bm{E}_{L}^\top(\underline{\bm{u}} - \overline{\underline{\bm{u}}})\|_2$ & $9.84\cdot 10^{-05}$ &$9.84\cdot 10^{-05}$&$9.84\cdot 10^{-05}$&$8.15\cdot 10^{-10}$&$8.15\cdot 10^{-10}$&$8.15\cdot 10^{-10}$\\\hline
%$\|\bm{E}_{L-1}^\top(\underline{\bm{u}} - \overline{\underline{\bm{u}}})\|_2$ & $1.45\cdot 10^{-07}$ & $1.45\cdot 10^{-07}$ & $1.45\cdot 10^{-07}$ &$1.27\cdot 10^{-12}$&$1.27\cdot 10^{-12}$&$1.27\cdot 10^{-12}$\\\hline
%$\|\bm{E}_{L-2}^\top(\underline{\bm{u}} - \overline{\underline{\bm{u}}})\|_2$ & $0$ & $1.01\cdot 10^{-09}$ & $1.01\cdot 10^{-09}$ & $0$ & $9.55\cdot 10^{-15}$ & $9.55\cdot 10^{-15}$\\\hline
%$\|\bm{E}_{L-3}^\top(\underline{\bm{u}} - \overline{\underline{\bm{u}}})\|_2$ & $0$ & $0$ & $0$ & $0$ & $0$ & $0$\\\hline
%$\|\bm{E}_{L-4}^\top(\underline{\bm{u}} - \overline{\underline{\bm{u}}})\|_2$ & $0$ & $0$ & $0$ & $0$ & $0$ & $0$\\\hline\hline
%\end{tabular}}
\fittable{\begin{tabular}{|c||c|c||c|c|c||c|c|c|c|}
	\hline\hline
	%\texttt{tol} & \multicolumn{3}{|c|}{$10^{-7}$} &\multicolumn{3}{|c|}{$10^{-12}$}\\\hline\hline
	Network & \multicolumn{2}{|c||}{\rev{\texttt{ER}}} & \multicolumn{3}{|c||}{\texttt{power}} & \multicolumn{4}{|c|}{\rev{\texttt{Borgo\_cascade}}}\\\hline\hline
	Error & $h=1$ & $h=2$ & $h=1$ & $h=2$ & $h=3$ & $h=1$ & $h=2$ & $h=3$ & $h=4$\\\hline\hline
	$\frac{\|\overline{\underline{\bm u}} - {\underline{\bm u}}\|_2}{\|{\underline{\bm u}}\|_2}$ & \texttt{tol} & \texttt{tol} & $2.99\cdot 10^{-6}$ & \texttt{tol} & \texttt{tol} & $4.96\cdot 10^{-4}$ & $9.08\cdot 10^{-5}$ & $1.74\cdot 10^{-5}$ & $3.58\cdot 10^{-6}$\\\hline\hline
	$\|\bm{E}_{L+1}^\top(\underline{\bm{u}} - \overline{\underline{\bm{u}}})\|_2$ & 0 & 0 & 0 & 0 & 0 & 0 & 0 & 0 & 0\\\hline
	$\|\bm{E}_{L}^\top(\underline{\bm{u}} - \overline{\underline{\bm{u}}})\|_2$ &\texttt{tol}&\texttt{tol} & \texttt{tol} &\texttt{tol}&\texttt{tol} & \texttt{tol} &\texttt{tol}&\texttt{tol} & \texttt{tol}\\\hline
	$\|\bm{E}_{L-1}^\top(\underline{\bm{u}} - \overline{\underline{\bm{u}}})\|_2$ & \texttt{tol} & \texttt{tol}  & \texttt{tol} &\texttt{tol}&\texttt{tol} & \texttt{tol} &\texttt{tol}&\texttt{tol} & \texttt{tol}\\\hline
	$\|\bm{E}_{L-2}^\top(\underline{\bm{u}} - \overline{\underline{\bm{u}}})\|_2$ & $0$ & \texttt{tol} & $2.0\cdot 10^{-3}$ & \texttt{tol} & \texttt{tol} & $9.99\cdot 10^{-2}$ & \texttt{tol} & \texttt{tol} & \texttt{tol}\\\hline
	$\|\bm{E}_{L-3}^\top(\underline{\bm{u}} - \overline{\underline{\bm{u}}})\|_2$ & $0$ & $0$ & $1.27\cdot 10^{-9}$ & $1.27\cdot 10^{-9}$ & \texttt{tol} & $1.83\cdot 10^{-2}$ & $1.83\cdot 10^{-2}$ & \texttt{tol} & \texttt{tol}\\\hline
	$\|\bm{E}_{L-4}^\top(\underline{\bm{u}} - \overline{\underline{\bm{u}}})\|_2$ & $0$ & $0$ & $0$ & $0$ & $0$ & $3.49\cdot 10^{-3}$ & $3.49\cdot 10^{-3}$ & $3.49\cdot 10^{-3}$ & \texttt{tol}\\\hline
	$\|\bm{E}_{L-5}^\top(\underline{\bm{u}} - \overline{\underline{\bm{u}}})\|_2$ &0&0&0&0&0&$7.11\cdot 10^{-4}$ & $7.11\cdot 10^{-4}$ & $7.11\cdot 10^{-4}$ & $7.11\cdot 10^{-4}$\\\hline\hline
\end{tabular}}
\caption{Relative truncation error and layer-wise absolute truncation errors for adding one layer to the \rev{\texttt{ER}} water network of Emilia--Romagna with real breakage data, the \texttt{power} network with random breakages, and the \texttt{Borgo\_cascade} water network of a small town in Emilia--Romagna with cascade failure, each with $L=10$.
	All errors up to the linear system tolerances $10^{-7}$ and $10^{-12}$ are denoted by \texttt{tol}.}\label{tab:truncation_strategy}
\end{table}

\subsubsection{Truncation accuracy and runtime gains}\label{sec:truncation-numerics}

In addition to the truncation error, we now quantify in how far the truncated centralities deviate from the classical Katz index and what runtime gains are attained by truncation.
All experiments in this subsection use the water network \rev{\texttt{ER}} of Emilia--Romagna with $N=261\,799$ edges, cf.~\Cref{sec:data}, with two modes of up- and downdates.
For \emph{random} breakages, $n_{\mathrm{break}}=6$ edges are chosen uniformly per time step, which matches the mean event rate of the real data.
For \emph{cascade} breakages, the edges are drawn from the line graph neighborhood of the already broken edges, cf.~\Cref{sec:data}.

Using $L=40$ layers, \Cref{fig:trunc-vs-classical} reports the relative truncation error $\|\overline{\underline{\bm{u}}}-\underline{\bm{u}}\|_2/\|\underline{\bm{u}}\|_2$ and the relative error $\|\overline{\underline{\bm{c}}}_k - \K(\bm{A}_k)\bm{1}\|_2/\|\K(\bm{A}_k)\bm{1}\|_2$, where $\overline{\underline{\bm{c}}}_k$ denotes the $k$-th block of the truncated centrality vector obtained by \Cref{alg:W_trunc}.
We consider the interior time points $k=10$ and $k=30$, i.e., time points at which nine and $29$ layers of up- and downdates, respectively, already occurred. We avoid the boundary case $k=1$, for which $h=1$ trivially replaces $\bm{c}_1$ by $\bm{c}_2$ without incurring any truncation error at all.
The error with respect to the untruncated solution drops to machine precision at $h=2$ for random breakages and at $h=2$ to $4$ for cascade breakages.
As predicted by \Cref{prop:block_decay} and already illustrated in \Cref{tab:truncation_strategy}, smaller distances between up- or downdated edges incur larger errors and hence require larger truncation parameters.
The distance to the classical index behaves differently.
For both query rows, it vanishes for $h=0$, where the truncated centrality coincides with the classical index of that row's own network, jumps at $h=1$, and then remains essentially constant up to $h=8$.
The information that distinguishes the failure-aware model of \Cref{sec:FATE} from the classical index is thus already contained in the first block superdiagonal, regardless of which layer is queried.

\begin{figure}[t]
  \centering
  \includegraphics[width=\linewidth]{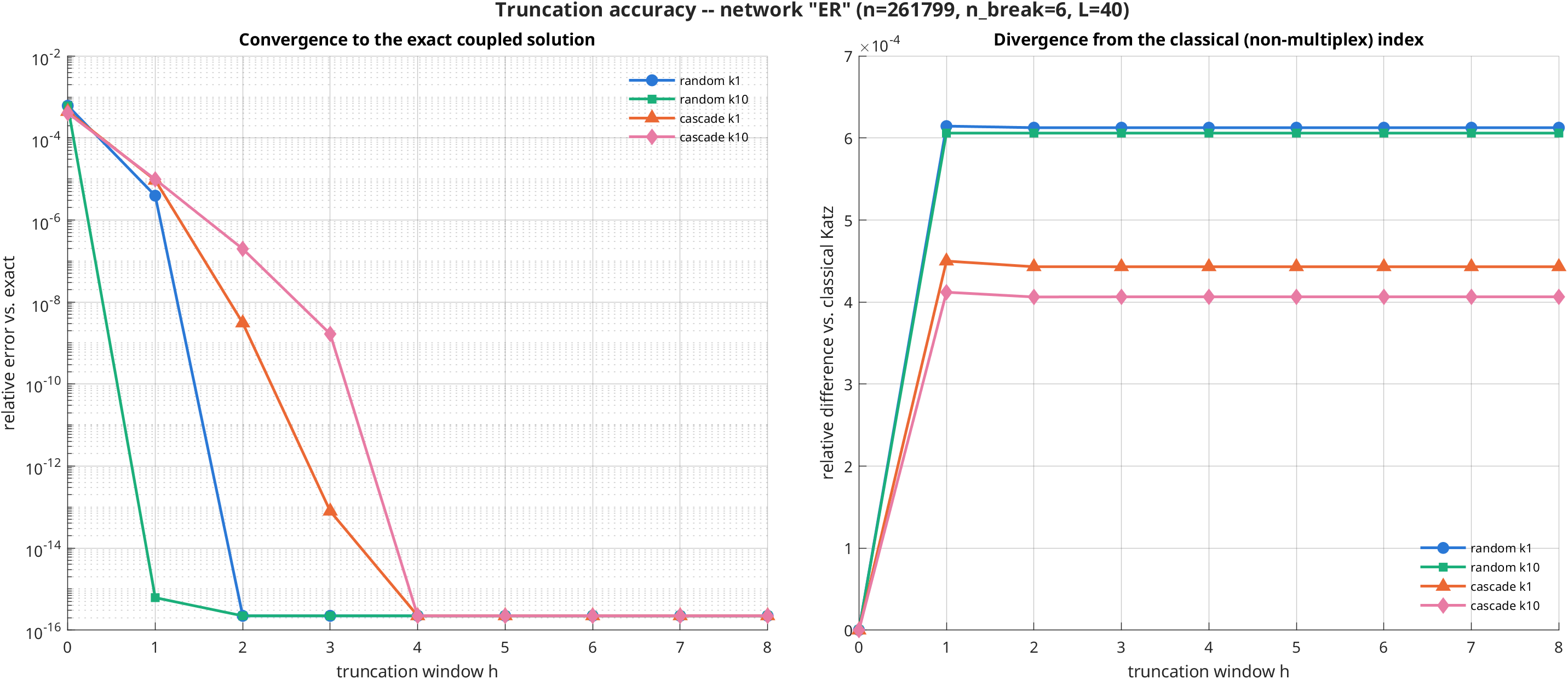}
  \caption{Relative truncation error (left) and relative error w.r.t.\ single-layer Katz centrality (right) for the network \rev{\texttt{ER}} with $L=40$ and $n_{\text{break}}=6$ with random and cascade breakages at the blocks $k=10$ and $k=30$ of the block centrality vector $\K(\bm{\mathcal{M}}_L)\bm{1}$.}
  \label{fig:trunc-vs-classical}
\end{figure}

% Table 5 (tab:truncation-vs-classical) removed: k=10 is already shown in fig:trunc-vs-classical.
% \input{tables/tab_truncation_vs_classical.tex}

\Cref{fig:runtime-comparison} compares the runtime required to obtain $\underline{\bm{c}}_{L+1}$ when adding layer $L+1$ while $\underline{\bm{c}}_L$ is available.
We compare the three strategies: solving the whole $(L+1)$-layer system \eqref{eqn:sys} with BiCGStab from scratch, back-substitution on the same full system, and the truncated update \eqref{eqn:overu} for $h=0,\dots,8$, cf.~\Cref{alg:W_trunc}.
Runtimes of the full BiCGStab solve increase from $4.6$ to $69.1$ seconds as $L$ grows from $20$ to $300$ while back-substitution runtimes increase from $0.38$ to $3.7$ seconds.
In contrast, the cost of the truncation strategy remains between $0.09$ and $0.22$ seconds independently of $L$ for different values of $h$.
The speedup over the full solve therefore increases with $L$, from a factor of $45$ at $L=20$ to a factor of $679$ at $L=300$ for $h=0$, and from $22$ to $316$ for $h=8$, cf.~\Cref{tab:comparsupdate}.

\begin{figure}[t]
  \centering
  \subfloat[Runtimes w.r.t.\ $L$]{
  \includegraphics[width=.55\linewidth]{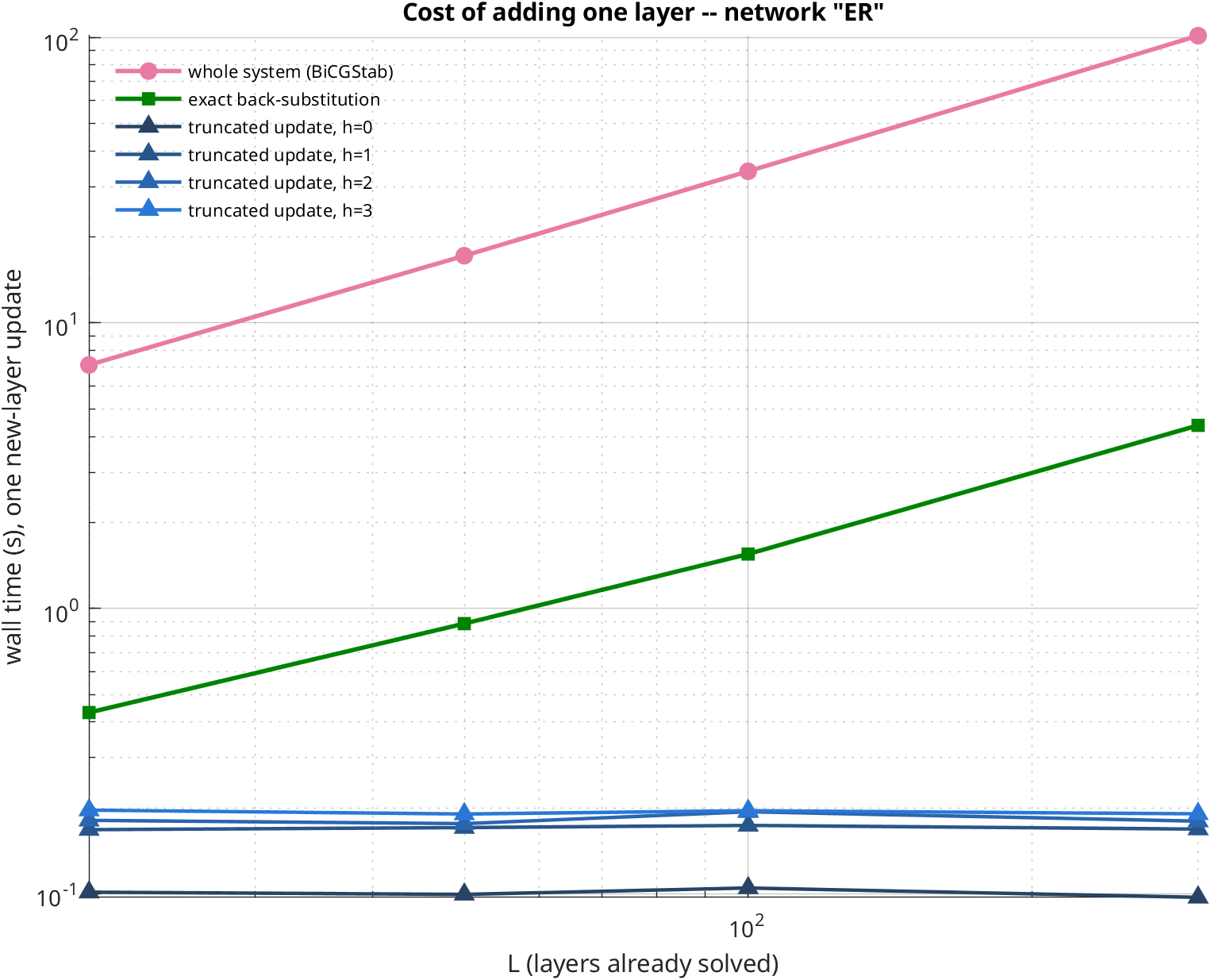}\label{fig:runtime-comparison}}
\hfill
	\subfloat[Runtimes for $h=8$]{%\fittable{%
			\footnotesize
			\begin{tabular}{lrr}
				\toprule
				L & Truncation (s) & BICGstab (s) \\
				\midrule
				20 & 0.2138 & 4.6198 \\
				50 & 0.2098 & 12.4471 \\
				100 & 0.2137 & 23.9885 \\
				300 & 0.2185 & 69.1402 \\
				\bottomrule
	\end{tabular}\label{tab:comparsupdate}}
%}
  \caption{Runtimes to obtain $\underline{\bm{c}}_{L+1}$ once layer $L+1$ is added to an untruncated $L\times L$-block system for the \rev{\texttt{ER}} (Emilia--Romagna) water network:
  whole-system BiCGStab and exact back-substitution (both recomputing all $L+1$ layers from scratch) against the truncation strategy with $h=0,\dots,8$, cf.~\Cref{alg:W_trunc}.}
\end{figure}

% Table 6 (tab:comparsionapproaches) removed: same data as fig:runtime-comparison.
% \input{tables/tab_comparsionapproaches.tex}

%\input{tables/tab_comparsupdate.tex}

\subsubsection{Cumulated truncation error for recursive updating}\label{sec:recursive_updating}

Finally, starting from $L=1$, \Cref{fig:trajectory} recursively adds $100$ layers with different truncation parameters.
We use $n_{\mathrm{break}}=100$ random edge breakages per step.
We measure the cumulated error $\|\overline{\bm{C}}_\ell-\bm{C}_\ell\|_F/\|\bm{C}_\ell\|_F$ after $\ell$ updates, where $\bm{C}_\ell,\overline{\bm{C}}_\ell\in\R^{N\times \ell}$ collect the untruncated and truncated centrality vectors of all layers $1,\dots,\ell$.
With random breakages, distances between consecutive breakages are sufficiently large such that \rev{$\K(\bm{\mathcal{M}}_L)$ is approximately block-diagonal, which implies that} the error of all truncation strategies ranges at machine precision over the whole trajectory.
With cascade breakages, truncation errors grow during the first approximately $40$ updates, peaking at $1.0\cdot 10^{-6}$ for $h=1$ and $2.5\cdot 10^{-8}$ for $h=2$.
The truncation strategy hence exhibits bounded cumulated errors, even in the worst case scenario.
%Under cascade breakages the adaptive window invests more solves than either fixed window when the data demands it ($335$, versus $199$ for $h=1$ and $297$ for $h=2$), but in exchange nearly halves the error of $h=2$ and cuts the error of $h=1$ by a factor of $84$.
% \Cref{tab:truncupdate-extended} lists the cumulated error at several points of the trajectory.

\begin{figure}[t]
  \centering
  \includegraphics[width=\linewidth]{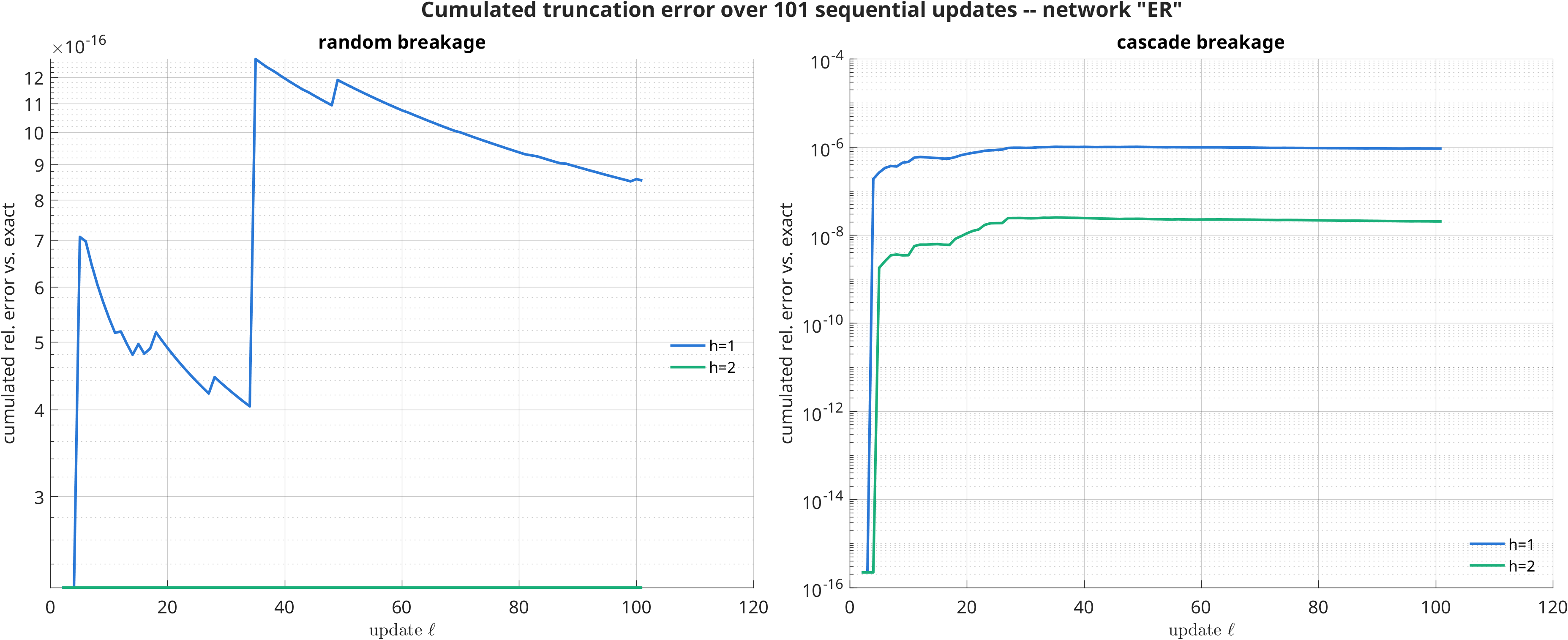}
  \caption{Cumulated relative error over $L=101$ sequential updates for the network \rev{\texttt{ER}} ($n_{\mathrm{break}}=100$ random edge breakages per step) with $h=1$ (blue) and $h=2$ (green), for random (left) and cascade (right) breakages.}
  \label{fig:trajectory}
\end{figure}

\subsection{Parallel solvers and scaling}\label{sec:scaling-experiments}

The block structure of \eqref{centrality} offers two sources of parallelism.
First, truncation after the super-diagonal as described in \Cref{sec:truncation}, which corresponds to $h=0$ and to which we refer as block CG.
As discussed in \Cref{sec:truncation}, the incurred truncation error depends on distances between consecutive up- or downdates.
Second, in the real-time monitoring of infrastructure networks, one is typically interested in the potential effect of several candidate breakages for the next time step.
We represent $R\in\N$ such candidates via the different adjacency matrices $\bm{A}_{L+1}^{(1)},\dots,\bm{A}_{L+1}^{(R)}$ for layer $L+1$.
The computed quantities hence only differ by the right-most quantity $\bm{u}_{L+1}^{(r)}=\K(\bm{A}_{L+1}^{(r)})\bm{1}$ while $\bm{c}_L$ and $\bm{W}_{L-h},\dots,\bm{W}_{L-1}$ are identical.
We refer to the concurrent evaluation of $R$ such candidates as the \rev{\emph{truncated-update ensemble}}.

To reduce communication costs, both strategies apply $(\bm{I}-\bm{A}_\ell)$ without ever forming $\bm{A}_\ell$ explicitly.
Instead, we define a diagonal matrix $\bm{D}_\ell$ with diagonal entries $0$ and $1$ to mark edges that are intact in layer $\ell$.
Matrix-vector products are hence realized via $\bm{A}_\ell = \alpha\bm{D}_\ell\widetilde{\bm{A}}\bm{D}_\ell$.
Every worker now stores $\widetilde{\bm{A}}$ once instead of one matrix per layer, which for $L=768$ layers of the network \rev{\texttt{ER}} reduces the data per worker from $14$\,GB to $18$\,MB, which makes the approach scalable to large-scale problems.

We use the water network \rev{\texttt{ER}} of Emilia--Romagna with $n_{\mathrm{break}}=100$ random breakages per time step, $c_\alpha=0.8$, a CG tolerance of $10^{-8}$, and the truncation window $h=2$ in the truncated-update ensemble setting.
All timings report the minimum of three runs on up to $p=64$ workers, i.e., one socket of the node.
Following standard methodology \cite{amdahl1967,gustafson1988}, we report the wall clock time $T(p)$ on $p$ workers together with the parallel efficiency $E(p)=T(1)/(p\,T(p))$ for a fixed problem size (strong scaling), and $E(p)=T(1)/T(p)$ for a problem size that grows proportionally with $p$ (weak scaling).
Both efficiencies are measured against the one-worker runtime of the same parallel code, since the thread usage of a serial reference implementation cannot be controlled reliably.

\Cref{tab:strong-scaling,tab:weak-scaling,fig:scaling} report parallel scaling results over a range of $p=1$ to $p=64$, corresponding to $L=R=12$ to $L=R=784$ layers on a dual-socket AMD EPYC~9534 node.
Both strategies exhibit an efficiency of at least $84\%$ up to $p=16$ for both weak and strong scaling and reach $60\%$ for block CG and $65\%$ for the truncated ensemble at $p=64$.
\Cref{tab:strong-scaling} shows that the matrix-free approach offers runtime advantages over communicating all layer adjacency matrices with its advantage growing from a factor of $1.12$ at $p=1$ to $3.20$ at $p=64$.
Absolute timings illustrate the computational benefit of both strategies.
BiCGStab applied to the full system with $NL\approx 2.0\cdot 10^{8}$ requires $261.1$ seconds, while the decoupled diagonal blocks are sequentially solved in $62.2$ seconds.
A single truncated update requires $0.20$ seconds, such that the allocation of $12$ layers to each worker leads to $2.6$ seconds for an ensemble of $768$ candidate breakages on $64$ workers.
This corresponds to a runtime two orders of magnitude lower compared to one solve of the full system.

Finally, the bottom-right panel of \Cref{fig:scaling} illustrates memory contention: all cores on the node share one path to RAM, such that competing for the shared path slows down each core when running many of them in parallel.
A workload whose data fits inside each core's own local cache barely requires that shared path, which ensures efficiency regardless of how many other cores are busy.
The plot shows the slowdown factor $T(p)/T(1)$ where every one of the $p$ workers repeats the exact same fixed task independently.
Tt equals $1$ if adding more simultaneous workers has no effect, while larger values above $1$ indicate stronger competition for memory.

\begin{table}[t]
  \centering
  \footnotesize
  	\caption{Runtimes and parallel efficiency in the strong scaling setting with fixed $L=R=768$ for the network \rev{\texttt{ER}}.
  	Efficiency is computed against the pool's own $p=1$ time.
  	``Truncated ensemble'' times an independent candidate breakage scenario while ``block CG'' solves one diagonal block system per worker.}
  \label{tab:strong-scaling}
  \fittable{
  \begin{tabular}{lrrrrrrr}
    \toprule
    Workers $p$                    & 1     & 2     & 4     & 8     & 16    & 32    & 64    \\
    \midrule
    Block CG, matrix-free (s)      & 44.24 & 24.94 & 12.22 & 6.08  & 3.22  & 1.84  & 1.16  \\
    Block CG, explicit (s)         & 49.76 & 27.26 & 14.61 & 7.73  & 4.94  & 4.25  & 3.72  \\
    Truncated ensemble (s)         & 108.33& 58.77 & 29.75 & 14.81 & 7.90  & 4.26  & 2.62  \\
    \midrule
    Strong efficiency, block CG           & 1.00  & 0.89  & 0.91  & 0.91  & 0.86  & 0.75  & 0.60  \\
    Strong efficiency, truncated ensemble & 1.00  & 0.92  & 0.91  & 0.91  & 0.86  & 0.79  & 0.65  \\
    \bottomrule
  \end{tabular}}
\end{table}

\begin{table}[t]
  \centering
  \footnotesize
  \caption{Runtimes and parallel efficiency in the weak scaling setting with $L=R=12p$ for the network \rev{\texttt{ER}}.
  	``Truncated ensemble'' times an independent candidate breakage scenario while ``block CG'' solves one diagonal block system per worker.}
  \label{tab:weak-scaling}
  \fittable{
  \begin{tabular}{lrrrrrrr}
    \toprule
    Workers $p$                    & 1     & 2     & 4     & 8     & 16    & 32    & 64    \\
    Layers $L=R$                   & 12    & 24    & 48    & 96    & 192   & 384   & 768   \\
    \midrule
%    \multicolumn{8}{l}{\emph{Weak scaling}}\\
    Block CG (s)                   & 0.74  & 0.76  & 0.80  & 0.80  & 0.86  & 0.91  & 1.08  \\
    Truncated ensemble (s)         & 1.68  & 1.97  & 1.86  & 1.86  & 2.01  & 2.10  & 2.42  \\\midrule
    Weak efficiency, block CG      & 1.00  & 0.97  & 0.92  & 0.92  & 0.85  & 0.82  & 0.68  \\
    Weak efficiency, truncated ensemble    & 1.00  & 0.85  & 0.90  & 0.90  & 0.84  & 0.80  & 0.69  \\\bottomrule
    % Contention diagnostic (identical fixed workload run on every worker) -- left out, see git history.
    % \midrule
    % \multicolumn{8}{l}{\emph{Contention diagnostic (identical fixed workload run on every worker)}}\\
    % Sparse solve slowdown          & 1.00  & 1.13  & 1.14  & 1.15  & 1.17  & 1.22  & 1.54  \\
    % In-cache control slowdown      & 1.00  & 1.00  & 1.00  & 1.00  & 1.01  & 1.04  & 1.09  \\
  \end{tabular}}
\end{table}

\begin{figure}[t]
  \centering
  \includegraphics[width=\linewidth]{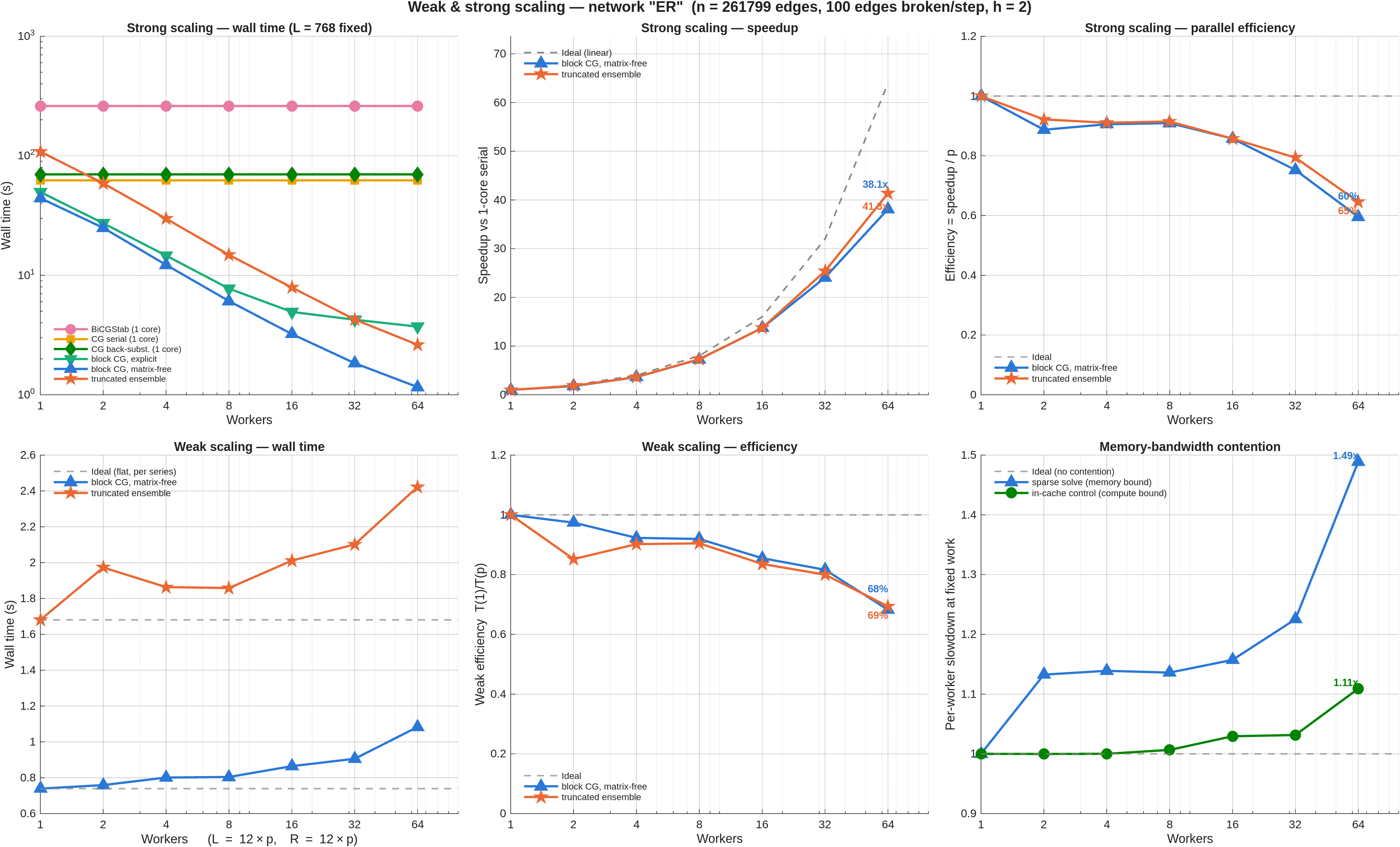}
  \caption{Weak and strong scaling of the block-CG ($h=0$) and truncated-update ($h=2$) approaches on the water network \rev{\texttt{ER}} with $N=261\,799$ edges and $100$ random edge downdates per layer with up to $p=64$ workers (one socket).
  	Top row: strong-scaling wall time, speedup,
    and parallel efficiency at fixed $L=R=768$.
    Bottom row: weak-scaling wall
    time and efficiency at $L=R=12p$ as well as the memory-bandwidth contention
    diagnostic comparing a
    memory-bound sparse solve against a compute-bound in-cache control for an
    identical task per worker.}
    %Efficiency exceeds $1$ for $p\le 32$ because splitting the fixed workload across more workers shrinks each worker's data below the point where it fits in cache, so the resulting speedup outpaces $p$ until bandwidth contention pulls efficiency back below $1$ at $p=64$.}
  \label{fig:scaling}
\end{figure}

%%%%%%%%%%%%%
\section{Conclusions}

We have proposed a failure-aware temporal multiplex network model relying on line graph adjacency matrices and low-rank update matrices in a supra-adjacency framework.
The analysis of the inverse of this block matrix in terms of entry decay allowed for two highly attractive computational approaches based on truncating the block-inverse with rigorous error control.

We believe that computationally efficient numerical linear algebra frameworks for edge-based analysis of complex networks represent a promising road for future research.
Especially classes of networks not considered in this manuscript, such as small-world networks, appear to be challenging in this respect.
%\todo[inline]{any other future work?}

\vs{
\section*{Acknowledgments} F.G. and V.S. would like to thank HeraTech S.r.l. for providing the data used to generate the matrices for the examples {\tt{ER}} and {\tt{Borgo\_cascade}}.
The work of F.G. was performed while at the Department of Mathematics, University of Bologna, as a PhD student, supported by the European Union - NextGenerationEU
under the National Recovery and Resilience Plan (PNRR) DM351 (09/04/2022), Mission 4, Component 1, Investments 3.4 and 4.1.
V.S. is a member of the INdAM Research Group
GNCS, and her work was partially supported by the European Union - NextGenerationEU
under the National Recovery and Resilience Plan (PNRR) - Mission 4 Education and research -
Component 2 From research to business - Investment 1.1 Notice Prin 2022 - DD N. 104 of 2/2/2022, code 20227PCCKZ – CUP J53D23003620006.
}

The authors thank Sebastian Esche for providing the data for the Mittweida street network.
\bibliographystyle{siam}
\bibliography{mlwater}

%%%%%%%%%%%%%%%%%%%%%%%%%%%%%%%%%%%%%%%%%
\appendix

\section{Proof of \Cref{prop:block_inverse}}\label{proof:block_inverse}
% This appendix contains the proof of Proposition~\ref{prop:X}.

% Proof begins
\begin{proof}
The matrix $\bm{\mathcal{M}}_L$ is block upper bidiagonal, hence $\K(\bm{\mathcal{M}}_L) = ( \bm{I}-\bm{\mathcal{M}}_L)^{-1}$ is block upper triangular, with block diagonal entries $\bm{X}_{k,k} = \K(\bm{A}_k) =  (\bm{I}-\bm{A}_k )^{-1}$. We only need to derive the upper blocks by back substitution. Moreover, for any block column $\ell$, the entries of $\bm{X}_{.,\ell}$ only depend on the principal matrix $\bm{\mathcal{M}}_\ell,$ where $\bm{\mathcal{M}}_\ell$ describes the upper left block matrix up to time-point $\ell$ for all $\ell=1,\ldots,L$. Hence, we can show how to write the elements of the last block of columns, and the other ones will have the same structure. The matrix $\bm{X}_{.,L}$ is obtained as the solution to the following linear system
\[
( \bm{I}-\bm{\mathcal{M}}_L) \bm{X}_{.,L} = \bm{E}_{L}, \quad \bm{E}_{L} = [\bm{0}, \dots, \bm{0}, \bm{I}_N]^T \in \mathbb{R}^{NL \times N}
\]

% Matrix representation
that is
\[
\begin{bmatrix}
\bm{I}-\bm{A}_1 & -\bm{W}_1 & \bm{0} & \dots & \bm{0} \\
\bm{0} & \bm{I}-\bm{A}_2 & -\bm{W}_2 & \dots & \bm{0} \\
\bm{0} & \bm{0} & \ddots & \ddots & \vdots \\
\bm{0} & \bm{0} & \dots & \bm{I}-\bm{A}_{L-1} & -\bm{W}_{L-1} \\
\bm{0} & \bm{0} & \dots & \bm{0} & \bm{I}-\bm{A}_{L}
\end{bmatrix}
\begin{bmatrix}
\bm{X}_{1,L} \\
\bm{X}_{2,L} \\
\vdots \\
\bm{X}_{L-1,L} \\
\bm{X}_{L,L}
\end{bmatrix}
=
\begin{bmatrix}
\bm{0} \\
\bm{0} \\
\vdots \\
\bm{0} \\
\bm{I}_N
\end{bmatrix}.
\]

Using $\bm{X}_{L,L} = (\bm{I}-\bm{A}_{L})^{-1} = \K(\bm{A}_L)$ and back substituting, we obtain
\[
( \bm{I}-\bm{\mathcal{M}}_{L-1} ) 
\begin{bmatrix}
\bm{X}_{1,L} \\
\vdots \\
\bm{X}_{L-1,L}
\end{bmatrix}
= \bm{E}_{L-1} \bm{W}_{L-1} (\bm{I}-\bm{A}_{L})^{-1}.
\]

Hence, by solving with $\bm{\mathcal{M}}_{L-1}$ using the same strategy, we obtain
$$
\bm{X}_{L-1,L} = (\bm{I}-\bm{A}_{L-1})^{-1} \bm{W}_{L-1} (\bm{I}-\bm{A}_{L})^{-1} = \K(\bm{A}_{L-1}) \bm{W}_{L-1} \K(\bm{A}_{L}),
$$
and substituting again,
\begin{align}
(\bm{I}-\bm{\mathcal{M}}_{L-2} )
\begin{bmatrix}
\bm{X}_{1,L} \\
\vdots \\
\bm{X}_{L-2,L}
\end{bmatrix}
&= \bm{E}_{L-2} \bm{W}_{L-2} \bm{X}_{L-1,L} \\
&= \bm{E}_{L-2} \bm{W}_{L-2} (\bm{I}-\bm{A}_{L-1})^{-1} \bm{W}_{L-1} (\bm{I}-\bm{A}_{L})^{-1}.
\end{align}

Solving the system above explicitly gives the next block: again, we explicitly obtain the bottom block term as
\begin{align*}
\bm{X}_{L-2,L} &~= ( \bm{I}-\bm{A}_{L-2})^{-1} \bm{W}_{L-2} (I-\bm{A}_{L-1})^{-1} \bm{W}_{L-1} (\bm{I}-\bm{A}_{L})^{-1}\\
&~= \K(\bm{A}_{L-2}) \bm{W}_{L-2} \K(\bm{A}_{L-1}) \bm{W}_{L-1} \K(\bm{A}_{L})
\end{align*}
and so on, up to the block $\bm{X}_{1,L}$.
\end{proof}

\section{Proof of \Cref{prop:block_decay}}
\label{app:proof_prop_4.4}

\begin{proof}
The proof proceeds in four steps: first, we factorize the disruption matrices $\bm{W}_m$; second, we bound the norm of the core interaction blocks; third, we bound the boundary terms; and finally, we assemble these bounds to derive the result.

\textbf{Step 1: Factorization of the Block Product.}
Recall from Proposition 4.1 that for $k < \ell$, the block $\bm{X}_{k,\ell}$ of the inverse matrix $(\bm{I} - \mathcal{\bm{M}}_L)^{-1}$ is given by
\begin{equation}
\bm{X}_{k,\ell} = \left( \prod_{m=k}^{\ell-1} \K(\bm{A}_m) \bm{W}_m \right) \K(\bm{A}_\ell),
\end{equation}
where $\K(\bm{A}) = (\bm{I} - \bm{A})^{-1}$. 
Each disruption matrix $\bm{W}_m$ represents a rank-2 modification corresponding to the 
\vs{removal or inclusion of}
%breakage or repair of a pipe at
node $i_m$. We can factorize $\bm{W}_m$ as
\begin{equation}
\bm{W}_m = \bm{U}_m \widehat{\bm{U}}_m^T, \quad \text{with} \quad \bm{U}_m = \begin{bmatrix} e_{i_m} & a_{i_m} \end{bmatrix}, \quad \widehat{\bm{U}}_m = \begin{bmatrix} a_{i_m} & e_{i_m} \end{bmatrix},
\end{equation}
where $a_{i_m} = \bm{A}_m e_{i_m}$ is the $i_m$-th column of $\bm{A}_m$. 
Substituting this factorization into the expression for $\bm{X}_{k,\ell}$, we obtain
\begin{equation}
\bm{X}_{k,\ell} = \K(\bm{A}_k) \bm{U}_k \left( \prod_{m=k}^{\ell-2} \widehat{\bm{U}}_m^T \K(\bm{A}_{m+1}) \bm{U}_{m+1} \right) \widehat{U}_{\ell-1}^T \K(\bm{A}_\ell).
\end{equation}
For notational convenience, let us define the $2 \times 2$ \textit{core interaction matrix} at step $m$ as
\begin{equation}
\bm{T}_m := \widehat{\bm{U}}_m^T \K(\bm{A}_{m+1}) \bm{U}_{m+1} \in \mathbb{R}^{2 \times 2}.
\end{equation}
Then the action of $\bm{X}_{k,\ell}$ on the vector of ones $\mathbf{1}$ can be written as
\begin{equation}
\bm{X}_{k,\ell} \mathbf{1} = \K(\bm{A}_k) \bm{U}_k \left( \prod_{m=k}^{\ell-2} \bm{T}_m \right) \widehat{\bm{U}}_{\ell-1}^T \K(\bm{A}_\ell) \mathbf{1}.
\end{equation}
Note that $\K(\bm{A}_\ell) \mathbf{1} = c_\ell$ is the Katz centrality vector of layer $\ell$.

\textbf{Step 2: Bounding the Core Interaction Matrix.}
We now bound the norm of $\bm{T}_m$. The entries of $\bm{T}_m$ are of the form $u^T \K(\bm{A}_{m+1}) v$, where we point out that $u, v \in \{e_{i_m}, a_{i_m}, e_{i_{m+1}}, a_{i_{m+1}}\}$. 
Specifically, the entries involve terms like $e_{i_m}^T \K(\bm{A}_{m+1}) e_{i_{m+1}}$ and $a_{i_m}^T \K(\bm{A}_{m+1}) a_{i_{m+1}}$.
By \Cref{lem:lemma2}  the entries of $\K(\bm{A}_{m+1})$ decay exponentially with the graph distance. Specifically, for any nodes $p, q$:
\begin{equation}
|[\K(\bm{A}_{m+1})]_{pq}| \leq C \lambda^{d(p,q)},
\end{equation}
where $\lambda = \frac{\sqrt{1+c_\alpha} - \sqrt{1-c_\alpha}}{\sqrt{1+c_\alpha} + \sqrt{1-c_\alpha}}$ and $C$ is the constant from \Cref{lem:lemma2}.
Since $a_{i_m} = \bm{A}_m e_{i_m}$ is a linear combination of neighbors of $i_m$, the distance from any neighbor $i_m'$ to $i_{m+1}$ satisfies $|d(i_m, i_{m+1}) - d(i_m', i_{m+1})| \leq 1$ as discussed in \Cref{lemma:lemma_1}. 
For terms involving $a_{i_m} = \bm{A}_m e_{i_m}$, we have:
\begin{align}
|a_{i_m}^T \K(\bm{A}_{m+1}) e_{i_{m+1}}| 
&= \left| \sum_{j \in \mathcal{N}(i_m)} \alpha [\K(\bm{A}_{m+1})]_{j, i_{m+1}} \right| \\
&\leq \sum_{j \in \mathcal{N}(i_m)} \alpha C \lambda^{d(j, i_{m+1})} \\
&\leq \alpha g_{i_m} C \lambda^{d(i_m, i_{m+1}) - 1},
\end{align}
where the last inequality uses \Cref{lemma:lemma_1}: $d(j, i_{m+1}) \geq d(i_m, i_{m+1}) - 1$ for all $j \in \mathcal{N}(i_m)$.
Thus, terms involving $a_{i_m}$ introduce a factor proportional to the degree $g_{i_m}$ and the decay rate $\lambda^{d(i_m, i_{m+1}) - 1}$.
Combining these bounds, each entry of $\bm{T}_m$ satisfies
\begin{equation}
|[\bm{T}_m]_{rs}| \leq (1 + \alpha g_{\max}) C \lambda^{d(i_m, i_{m+1}) - 1},
\end{equation}
where $g_{\max}$ is the maximum degree of the network. 
Using the Frobenius norm to bound the spectral norm ($\|\bm{T}_m\|_2 \leq \|\bm{T}_m\|_F$), and noting $\bm{T}_m$ is $2 \times 2$, we obtain
\begin{equation}
\|\bm{T}_m\|_2 \leq 2 (1 + \alpha g_{\max}) C \lambda^{d(i_m, i_{m+1}) - 1}.
\end{equation}
Let $\tau_m := 2 (1 + \alpha g_{\max}) C \lambda^{d(i_m, i_{m+1}) - 1}$. Then $\|\bm{T}_m\|_2 \leq \tau_m$.

\textbf{Step 3: Bounding the Boundary Terms.}
We bound the left and right boundary terms in the product chain.
\textit{Left Boundary:} The term $\|\K(\bm{A}_k) \bm{U}_k\|_2$ involves the columns $\K(\bm{A}_k) e_{i_k}$ and $\K(\bm{A}_k) a_{i_k}$. 
Since $\|\K(\bm{A}_k)\|_2 \leq (1 - c_\alpha)^{-1}$ and $\|a_{i_k}\|_2 \leq \alpha \sqrt{g_{i_k}} \leq \alpha \sqrt{g_{\max}}$, we have
\begin{equation}
\|\K(\bm{A}_k) \bm{U}_k\|_2 \leq \sqrt{2} \max \left\{ \|\K(\bm{A}_k) e_{i_k}\|_2, \|\K(\bm{A}_k) a_{i_k}\|_2 \right\} \leq \frac{\sqrt{2} (1 + \alpha \sqrt{g_{\max}})}{1 - c_\alpha}.
\end{equation}
For simplicity, we bound this by $\frac{\sqrt{2} (1 + \alpha g_{\max})}{1 - c_\alpha}$.

\textit{Right Boundary:} The term is $\|\widehat{\bm{U}}_{\ell-1}^T c_\ell\|_2$. Since $c_\ell = \K(\bm{A}_\ell) \mathbf{1}$, the entries of $\widehat{\bm{U}}_{\ell-1}^T c_\ell$ are $a_{i_{\ell-1}}^T c_\ell$ and $e_{i_{\ell-1}}^T c_\ell$. 
Let $[c_\ell]_{\max}$ be the maximum entry of the centrality vector $c_\ell$. Then
\begin{equation}
\|\widehat{\bm{U}}_{\ell-1}^T c_\ell\|_2 \leq \sqrt{1 + (\alpha g_{\max})^2} [c_\ell]_{\max} \leq (1 + \alpha g_{\max}) [c_\ell]_{\max}.
\end{equation}

\textbf{Step 4: Assembly of the Bound.}
Combining the results from Steps 1--3, we apply the sub-multiplicativity of the vector norm:
\begin{equation}
\|\bm{X}_{k,\ell} \mathbf{1}\|_2 \leq \|\K(\bm{A}_k) \bm{U}_k\|_2 \left( \prod_{m=k}^{\ell-2} \|\bm{T}_m\|_2 \right) \|\widehat{\bm{U}}_{\ell-1}^T c_\ell\|_2.
\end{equation}
Substituting the bounds derived above:
\begin{equation}
\|\bm{X}_{k,\ell} \mathbf{1}\|_2 \leq \left( \frac{\sqrt{2} (1 + \alpha g_{\max})}{1 - c_\alpha} \right) \left( \prod_{m=k}^{\ell-2} \tau_m \right) \left( (1 + \alpha g_{\max}) [c_\ell]_{\max} \right).
\end{equation}
Grouping the constants and the decay terms, we define $\bar{C} = \frac{\sqrt{2} (1 + \alpha g_{\max})^2}{1 - c_\alpha} [c_\ell]_{\max}$. The product of the decay terms yields
\begin{equation}
\prod_{m=k}^{\ell-2} \lambda^{d(i_m, i_{m+1}) - 1} = \lambda^{\sum_{m=k}^{\ell-2} (d(i_m, i_{m+1}) - 1)}.
\end{equation}
Adjusting the constant to absorb the factors of $2$ and $C$ from $\tau_m$, we arrive at the final bound stated in Proposition 4.4:
\begin{equation}
\|\bm{X}_{k,\ell} \mathbf{1}\|_2 \leq \bar{C} \lambda^{\sum_{m=k}^{\ell-2} (d(i_m, i_{m+1}) - 1)},
\end{equation}
where $\bar{C}$ aggregates the polynomial factors in $g_{\max}$, $c_\alpha$, and $[c_\ell]_{\max}$.
\end{proof}

\end{document}